\newtheorem*{theorem*}{Theorem}
\newtheorem*{result*}{Result}
\newtheorem{proposition}{Proposition}
\newtheorem{conjecture}{Conjecture}
\newtheorem{corollary}{Corollary}
\newtheorem{lemma}{Lemma}
\theoremstyle{remark}
\newtheorem{remark}{Remark}
\theoremstyle{definition}
\newtheorem{definition}{Definition}
\newcommand{\Q}{\mathcal{Q}}
\newcommand{\A}{\mathcal{A}}
\newcommand{\M}{\mathcal{M}}
\newcommand{\K}{\mathcal{K}}
\newcommand{\T}{\mathcal{T}}
\newcommand{\C}{\mathcal{C}}
\newcommand{\F}{\mathcal{F}}
\renewcommand{\P}{\mathcal{P}}
\renewcommand{\T}{\mathcal{T}}
\newcommand{\R}{\mathcal{R}}
\newcommand{\torp}[2]{\texorpdfstring{#1}{#2}}
\title[Exploring the Dynamics of the Circumcenter Map]{Exploring the Dynamics of\\the Circumcenter Map\vspace{-.25cm}}
\author[N. McDonald]{Nicholas McDonald}
\thanks{N. McDonald, ETHZ, Lausanne, Switzerland. \texttt{nicholasmcdonald40@gmail.com}}
\author[R. Garcia]{Ronaldo Garcia}
\thanks{R. Garcia, Inst. Mat. Estat., Univ. Fed. Goiás, Goiânia, Brazil. \texttt{ragarcia@ufg.br}}
\author[D. Reznik]{Dan Reznik}
\thanks{D. Reznik$^*$, Data Science Consulting, Rio de Janeiro, Brazil. \texttt{dreznik@gmail.com}}
\date{}	
\begin{document}

\maketitle

\vspace{-1cm}
\begin{abstract}
Using experimental techniques,
we study properties of the ``circumcenter map'', which, upon $n$ iterations sends an $n$-gon to a scaled and rotated copy of itself. We also explore the topology of area-expanding and area-contracting regions induced by this map.
\end{abstract}

\section{Introduction}
\label{sec:intro}
Using simulation and visualization techniques, we explore interesting plane curves implied by a certain map applied to a generic polygon. For a preview of the graphical results, see \cref{fig:regions3456,fig:n3to11}. Let us first define this map, called here the ``circumcenter map''. Referring to \cref{fig:map}:

\begin{definition}[Circumcenter Map]
Given a point $M$ and a polygon $\P$ with vertices $P_1,\ldots,P_n$, the {\em circumcenter map} $\C_M(\P)$ yields a new polygon $\P'$ whose vertices are the circumcenters of $MP_1P_2$, $MP_2P_3$, \ldots, $MP_{n}P_1$, respectively.
\label{def:circ}
\end{definition}

\begin{figure}[H]
    \centering
    \includegraphics[trim=0 0 0 120,clip,width=.8\textwidth,frame]{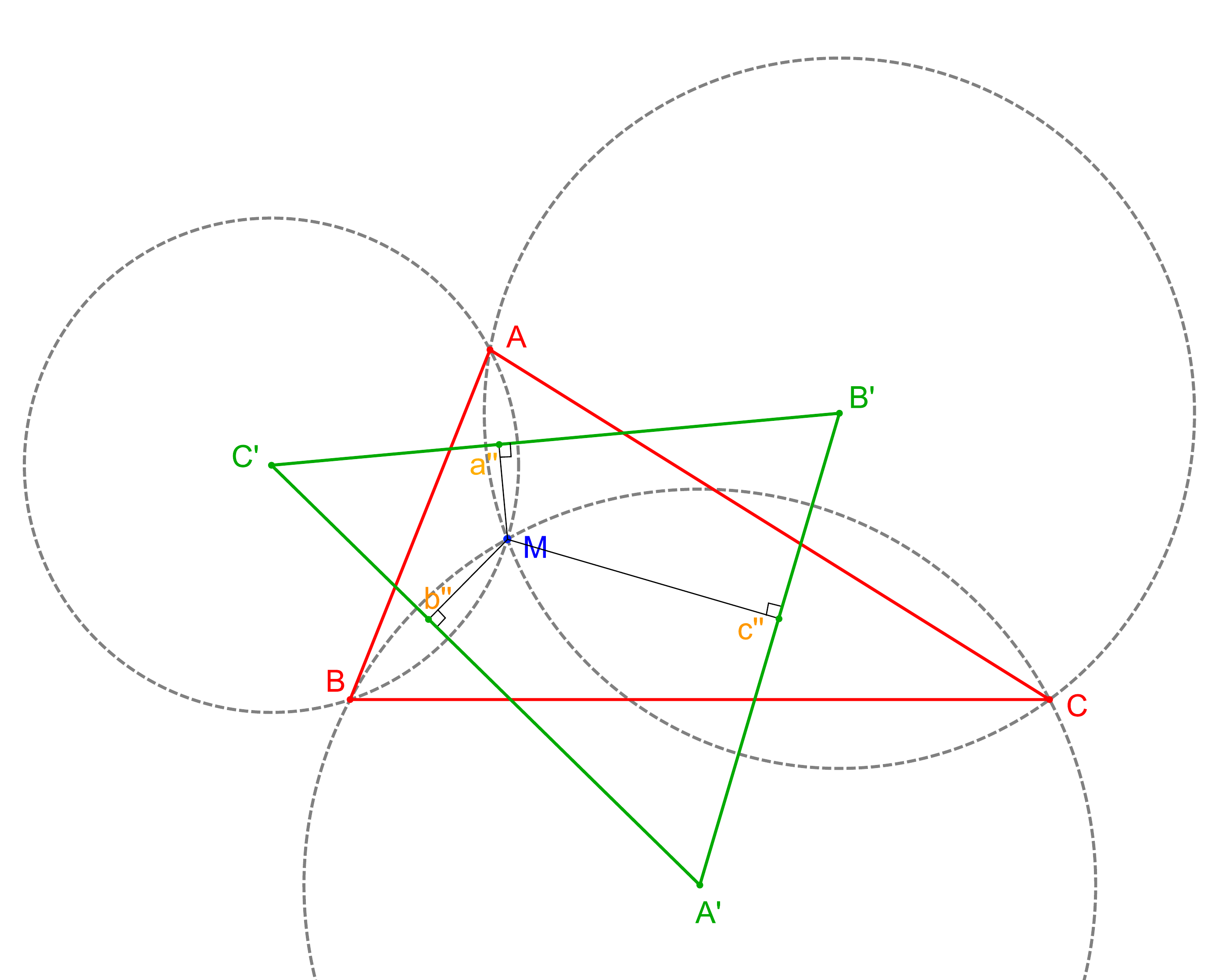}
    \caption{Given a point $M$, the circumcenter map sends a triangle $T=ABC$ to $T'=A'B'C'$ with vertices at the circumcenters of $MBC$, $MCA$, $MAB$, respectively. Also shown are the vertices $a'',b'', c''$ of the $M$-pedal of $T'$ which is a half-sized version of $T$.}
    \label{fig:map}
\end{figure}

We review classical results that underpin the phenomena studied herein, namely:
\begin{enumerate}[i)]
\item $\C_M(\P)$ yield the (half-sized) $M$-antipedal polygon, see \cref{fig:5-gon};
\item as a consequence of a result proved in \cite{stewart1940}, $n$ consecutive applications of this map, i.e., $\C_M^n(\P)$ yields a new polygon which is the image of $\P$ under a rigid rotation about $M$ by $\alpha$, and a homothety (uniform scaling) with ratio $s$, see \cref{fig:n4n5-reg};
\item subsequent $n$ applications of the map result in a transformation with identical parameters $\alpha$ and $s$. 
\end{enumerate}

\begin{figure}
    \centering
    \includegraphics[trim=25 50 0 0,clip,width=.8\textwidth,frame]{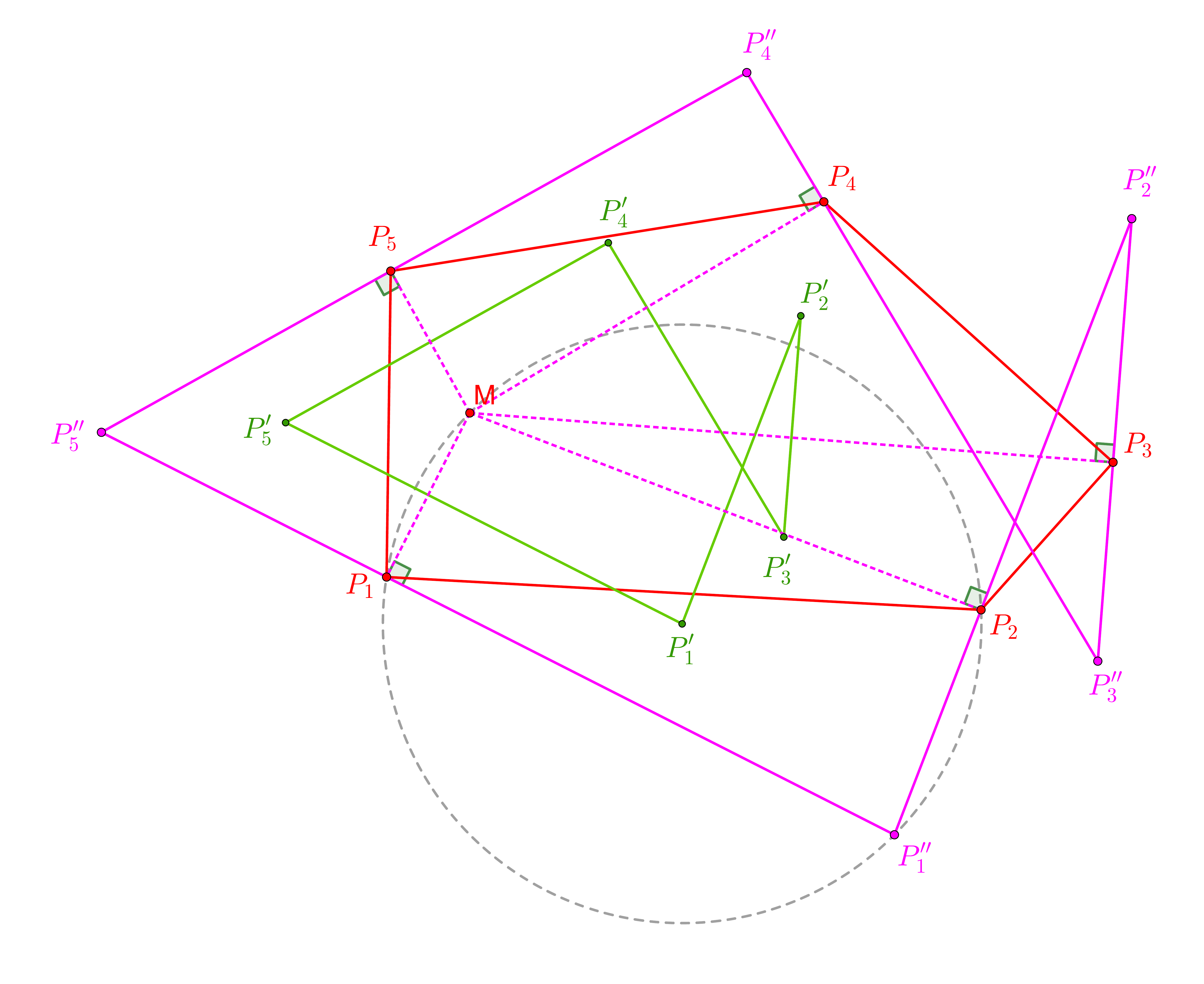}
    \caption{If $\P=\{P_i\}$ is a 5-gon (red), $\C_M(\P)$ is another 5-gon (green) which is a half-sized version of the $M$-antipedal of $\P$ (magenta). Also shown is the construction for $P_1'$ which is the center of circle $M P_i P_{i+1}$ (dashed gray).}
    \label{fig:5-gon}
\end{figure}

Given a starting polygon $\P$, we will study the locus of $M$ such that after $n$ applications of the map the resulting polygon has the same area ($s=1$) and/or angle ($\alpha=0$) as the original one. Experiments reveal an interesting structure of such loci, and beautiful symmetries when $\P$ is regular, see \cref{fig:n4n5-shrink}.

\begin{figure}
    \centering
    \includegraphics[width=\textwidth]{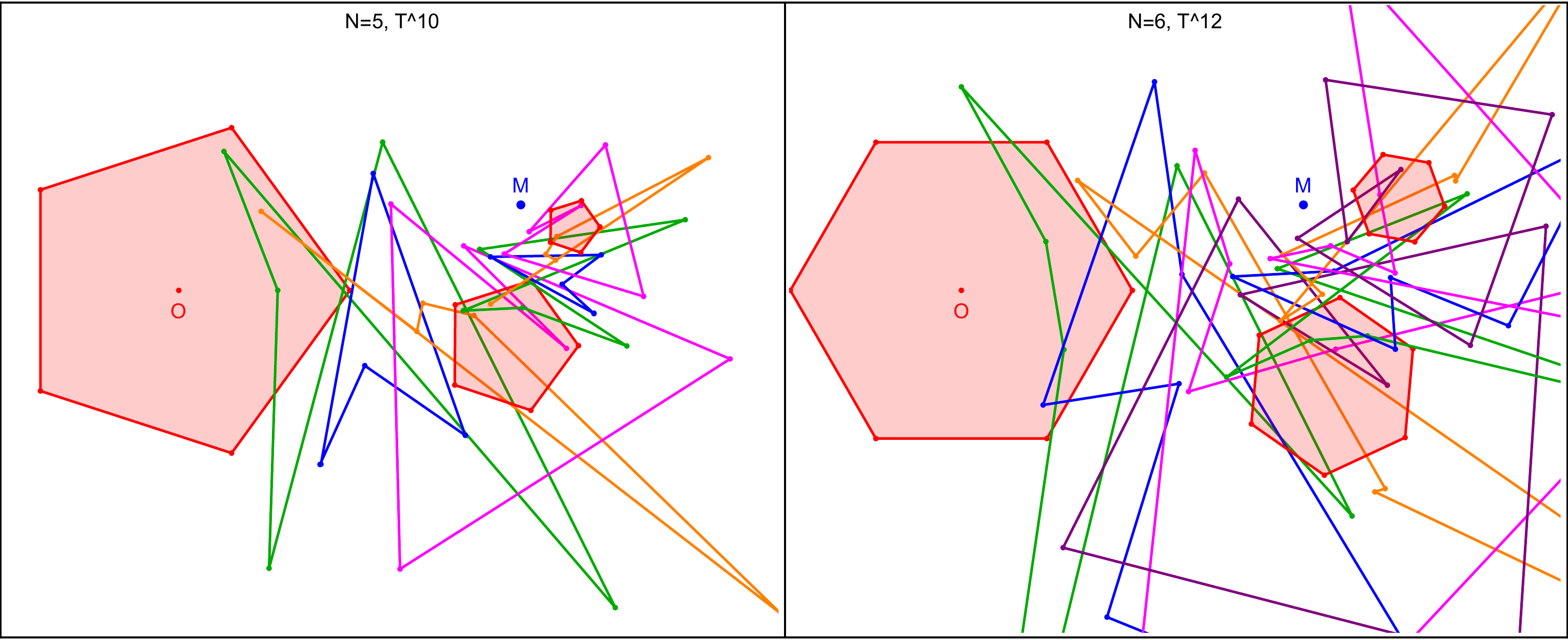}
    \caption{\textbf{Left:} 10 applications of the circumcenter map on a regular pentagon (left), showing 5-periodicity.  \textbf{Right:} 12 applications starting from a regular hexagon. Note intermediate shapes are arbirtrary, see \cref{fig:aliens456}.}
    \label{fig:n4n5-reg}
\end{figure}

\begin{figure}
    \centering
    \includegraphics[width=\textwidth]{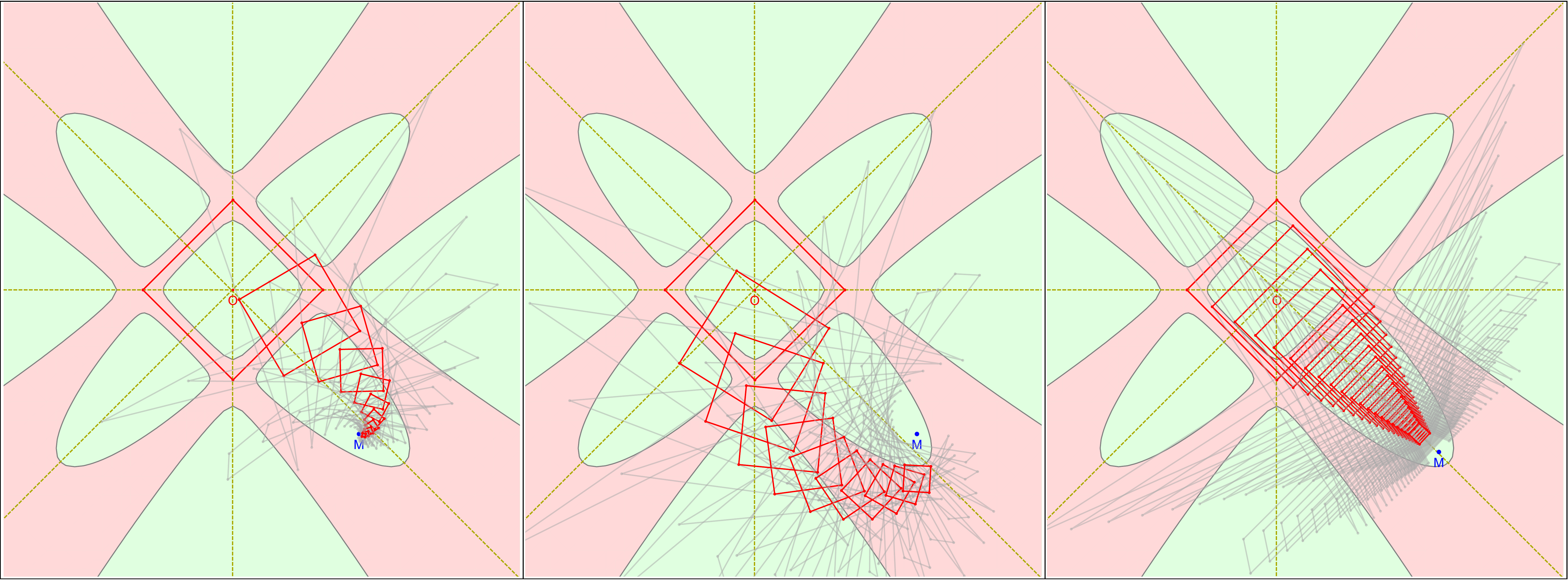}
    \caption{Iterations of the circumcenter map, starting from a square ($n=4$, red), with $M$ at an area-contracting region (green), but in locations which induce negative (left), positive (middle), or zero rotation (right) on the sequence. Intermediate iterates are shown gray, and can be of diverse shapes. Every four iterations produces a new polygon (red) which is a rotated, scaled version with respect to the original. The locus of $M$ such that $s=1$ (neutral scaling) is the boundary between the red and green regions; its shape depends on the original polygon.}
    \label{fig:n4n5-shrink}
\end{figure}

\subsection*{Main results}

The claims below were initially evidenced by simulation. Most are proved with a computer algebra system (CAS) and their proofs will be omitted.

\begin{enumerate}[1)]
\item We explicitly derive the locus of $M$ such that $s=1$, when $\P$ is an equilateral triangle (resp. a square): it is a 6-degree (resp. 8-degree) polynomial on the coordinates of $M$.
\item If $\P$ is a regular $n$-gon, the locus of $M$ such that $\alpha=0$ is the union of $n$ lines through the centroid of $\P$, rotated about each other by $\pi/n$.
\item In all cases there is a discrete set of locations such that both $s=1$ and $\alpha=0$. We study the map with $\P$ an equilateral triangle, showing that if $M$ is on any one of these locations the map is 3-periodic. If $M$ is at the centroid of the equilateral, the map is 2-periodic.
\item Based on compactified visualizations of the $s=1$ boundary, we conjecture that for all $n$: (i) there is only one connected region such that $s>1$, and (ii) if $n$ is odd (resp. even), the number of connected regions such that $s<1$ is given by $1+n(n+1)/2$ (resp. $1+n^2/2$), i.e., in both cases it is of $O(n^2)$.
\item We informally investigate topological changes in the $s=1$ locus with respect to affine stretching of the initial polygon.
\end{enumerate}

\subsection*{Background}


In \cite{stupel2018-subcevians}, properties of the ``central'' sub-triangle defined by a four-fold subdivision of a reference triangle (using cevians) are studied. A map based on the 2nd isodynamic point of a polygon's subtriangles is described in \cite{reznik2021-wolfram-iso}.

\subsection*{Article Organization}

In \cref{app:stewart} we review classical results that show that $n$ applications of the circumcenter map are a similarity transform with parameters $s$ and $\alpha$ which only depend on $M$. In \cref{sec:n3} we describe properties of the map for the initial polygon a triangle or a square. In \cref{sec:all-n} we extend the analysis to regular polygons of any number of sides. Conclusions and suggestions for further exploration appear in \cref{sec:conclusion}. To facilitate reproducibility, explicit expressions for the circumcenter map appear in \cref{app:map-tri}.

\section{Review: Stewart's Result}
\label{app:stewart}
Referring to \cref{fig:pedals}, recall that (i) the $M$-pedal polygon of a polygon $\P$ has vertices at the intersections of perpendiculars dropped from $M$ onto the sides of $\P$; (ii) the $M$-antipedal polygon of $\P$ is such that $\P$ is its $M$-pedal. Finally, (iii) the $M$-reflection polygon of $\P$ has vertices at the reflections of $M$ about the sidelines of $\P$. Clearly:

\begin{remark}
the $M$-reflection polygon of $\P$ is the twice-sized $M$-pedal polygon of $\P$, with $M$ as the homothety center.
\label{rem:twice}
\end{remark}

\begin{figure}
    \centering
    \includegraphics[width=.8\textwidth]{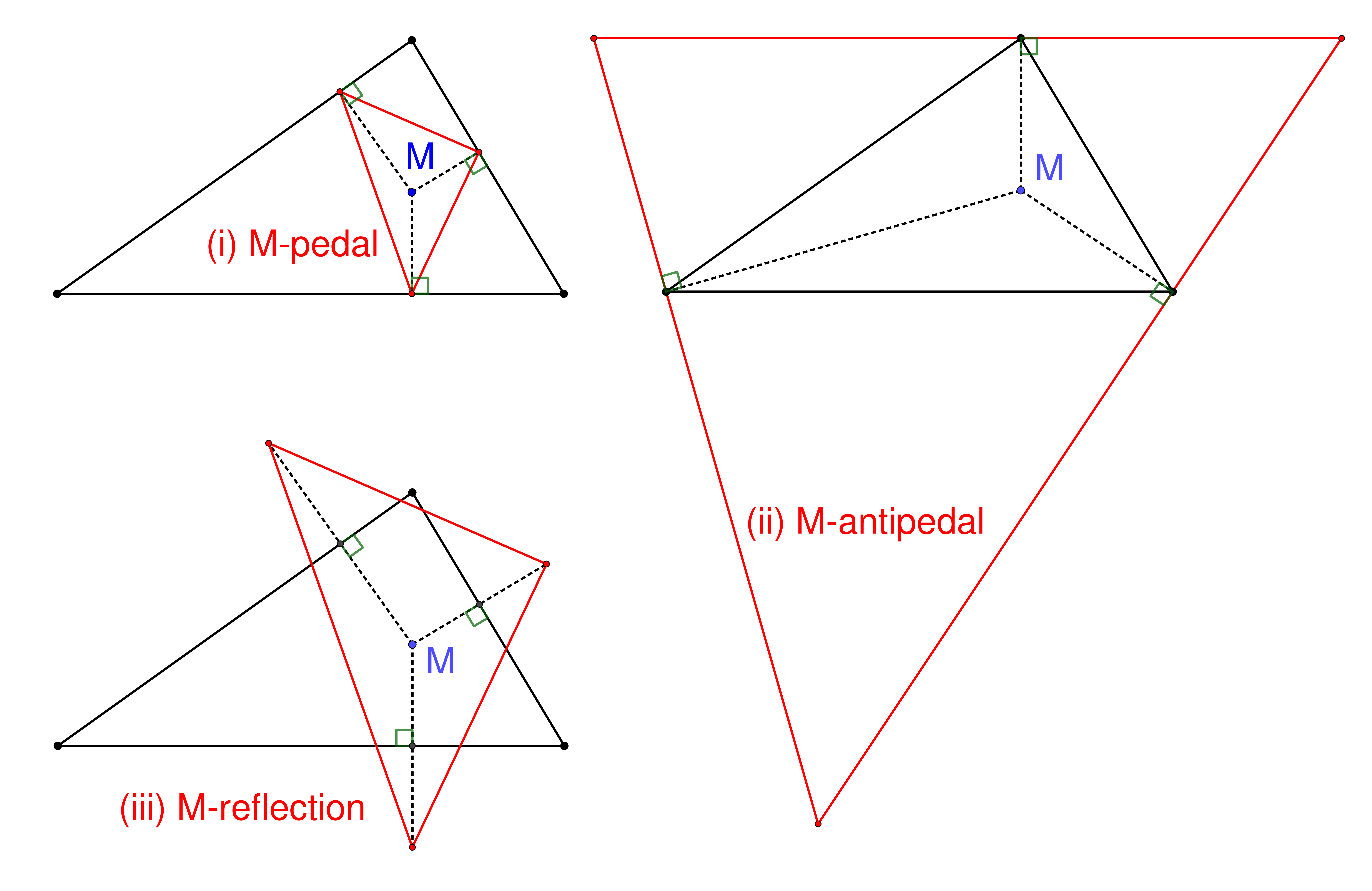}
    \caption{Constructions of pedal, antipedal, and reflection polygons, for the case of a triangle.}
    \label{fig:pedals}
\end{figure}

Let $\P'=\C_M(\P)$ be the polygon obtained under the circumcenter map of \cref{def:circ}. Let $P_i',i=1\ldots n$ denote its vertices.

\begin{lemma}
The polygon $\C_M^{-1}(\P)$ is the $M$-reflection polygon $\P$.
\label{lem:inverse}
\end{lemma}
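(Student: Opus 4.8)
\emph{Proof proposal.} The plan is to prove the equivalent statement $\C_M(\R)=\P$, where $\R$ denotes the $M$-reflection polygon of $\P$; exhibiting a polygon $\R$ whose image under the circumcenter map is $\P$ is exactly what it means to say $\R=\C_M^{-1}(\P)$. I would label the vertices of $\R$ as $R_1,\ldots,R_n$, where $R_i$ is the reflection of $M$ across the sideline $P_iP_{i+1}$ (indices mod $n$), matching the cyclic convention of \cref{def:circ}. By \cref{def:circ}, the $k$-th vertex of $\C_M(\R)$ is the circumcenter of the triangle $MR_kR_{k+1}$, so the whole argument reduces to identifying these $n$ circumcenters.

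The key geometric observation is that the vertex $P_{k+1}$ of $\P$ is precisely that circumcenter. Indeed, $P_{k+1}$ is the common vertex of the two adjacent sides $P_kP_{k+1}$ and $P_{k+1}P_{k+2}$, which are exactly the mirror lines defining $R_k$ and $R_{k+1}$. Since reflection across a line preserves the distance from any point lying on that line, and $P_{k+1}$ lies on both mirror lines, I would deduce
\[
|P_{k+1}M| = |P_{k+1}R_k| = |P_{k+1}R_{k+1}|.
\]
Thus $P_{k+1}$ is equidistant from the three points $M,R_k,R_{k+1}$, and (assuming these are not collinear) the circumcenter of a triangle is the unique point with this property, so the $k$-th vertex of $\C_M(\R)$ equals $P_{k+1}$. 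Letting $k$ run from $1$ to $n$ shows $\C_M(\R)=(P_2,\ldots,P_n,P_1)$, which is the polygon $\P$ up to a cyclic relabeling of its vertices, completing the identification.

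I expect the only real obstacles to be bookkeeping rather than substance: fixing a consistent indexing so that the cyclic shift appearing in the conclusion is handled cleanly, and recording the mild non-degeneracy hypothesis (the points $M,R_k,R_{k+1}$ non-collinear, equivalently $M$ off the relevant sidelines) needed for the circumcenter to be well defined and unique. A cleaner alternative, if one prefers to avoid coordinates entirely, would route through \cref{rem:twice}, which presents $\R$ as the twice-scaled $M$-pedal polygon; but the direct equidistance computation above is the shortest path to the result.
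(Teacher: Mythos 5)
Your proof is correct, but it runs in the opposite direction from the paper's and rests on a different elementary fact. The paper starts from $\P$ and its image $\P'=\C_M(\P)$: the consecutive circles $\K_i$ (through $M,P_i,P_{i+1}$, centered at $P_i'$) and $\K_{i+1}$ (through $M,P_{i+1},P_{i+2}$, centered at $P_{i+1}'$) meet in exactly two points, $M$ and $P_{i+1}$, and the two intersection points of a pair of circles are symmetric about the line joining their centers, i.e., about the sideline $P_i'P_{i+1}'$ of $\P'$. Hence each vertex of $\P$ is the reflection of $M$ in a sideline of $\P'$, which computes $\C_M^{-1}$ on the image of $\C_M$ and, in passing, shows the preimage is unique. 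You instead build the $M$-reflection polygon $\R$ of $\P$ and push it \emph{forward}, verifying $\C_M(\R)=\P$ via the equidistance characterization of the circumcenter ($P_{k+1}$ lies on both mirror lines, hence is equidistant from $M$, $R_k$, $R_{k+1}$); this establishes that $\R$ is a preimage of $\P$, i.e., existence, and is arguably the more literal reading of the statement, since the lemma applies $\C_M^{-1}$ to $\P$ itself rather than to $\C_M(\P)$. The two arguments are complementary one-sided-inverse claims (yours: $\C_M\circ\mathrm{Refl}_M=\mathrm{id}$; the paper's: $\mathrm{Refl}_M\circ\C_M=\mathrm{id}$), both hinging on perpendicular-bisector reasoning, and either suffices at the paper's level of rigor. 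Your explicit handling of the cyclic index shift and of the non-collinearity hypothesis is a small gain in care over the paper's version, which in fact contains a minor index slip: it writes $\K_i\cap\K_{i+1}=\{M,P_i\}$ where the common points are $\{M,P_{i+1}\}$.
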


\begin{proof}
Referring to \cref{fig:map},  $P_i'$ (resp. $P'_{i+1}$) is the center of a circle $\K_i$ passing through $M,P_i, P_{i+1}$ (resp. $\K_{i+1}$ passing through $M,P_{i+1},P_{i+2}$). 
We have that $\K_i\cap \K_{i+1}=\{M,P_i\}$.

The inverse circumcenter map is 
$\C_M^{-1}(\P')= 
 \P=\{P_i\}$. For each such pair of consecutive circles, the intersections $\{M,P_i\}$ are symmetric about $P'_i P'_{i+1}$. See \cref{fig:map} for the case $n=3$.
\end{proof}

\noindent Using \cref{rem:twice,lem:inverse}, the following property is illustrated in \cref{fig:5-gon}:

\begin{corollary}
$\C_M(\P)$ is homothetic to the $M$-antipedal of $\P$, with ratio $1/2$ and homothety center $M$.
\label{cor:antipedal}
\end{corollary}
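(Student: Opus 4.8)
The plan is to combine the two facts just established—the inverse identity $\C_M^{-1}(\P)=\mathrm{Refl}_M(\P)$ of \cref{lem:inverse} and the doubling relation of \cref{rem:twice}—with one additional ingredient that I would isolate first: the circumcenter map is equivariant under homotheties centered at $M$. Write $h_\lambda$ for the homothety about $M$ with ratio $\lambda$. Because a homothety carries circles to circles and the inscribed triangle $MP_iP_{i+1}$ to the inscribed triangle $M\,h_\lambda(P_i)\,h_\lambda(P_{i+1})$, it must carry each circumcircle to a circumcircle and hence each circumcenter to the corresponding circumcenter. Thus $\C_M(h_\lambda(\P))=h_\lambda(\C_M(\P))$ for every polygon and every $\lambda\neq 0$; this is the one genuinely new step and the place where I expect to be most careful.

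With equivariance in hand the corollary follows formally. Let $\Q$ denote the $M$-antipedal of $\P$, so that by definition the $M$-pedal of $\Q$ is $\P$. Applying \cref{lem:inverse} to $\Q$ in place of $\P$ gives $\C_M^{-1}(\Q)=\mathrm{Refl}_M(\Q)$. By \cref{rem:twice} the reflection polygon of $\Q$ is $h_2$ applied to the pedal of $\Q$, and the pedal of $\Q$ is exactly $\P$; hence $\mathrm{Refl}_M(\Q)=h_2(\P)$. Combining the two displays yields $\C_M^{-1}(\Q)=h_2(\P)$, equivalently $\Q=\C_M(h_2(\P))$.

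Finally I would push the homothety through the map using the equivariance: $\C_M(h_2(\P))=h_2(\C_M(\P))$, so $\Q=h_2(\C_M(\P))$ and therefore $\C_M(\P)=h_{1/2}(\Q)$, which is precisely the assertion that $\C_M(\P)$ is the $M$-antipedal of $\P$ scaled by $1/2$ about $M$. (Equivalently, one can read \cref{lem:inverse} as the operator identity $\C_M=\mathrm{Refl}_M^{-1}$ and decompose $\mathrm{Refl}_M=h_2\circ\mathrm{Ped}_M$, but the substitution above keeps the cited results visible.) The two points beyond this formal chain that need attention are: keeping the vertex labelling consistent, so that the circumcenter indexed by the edge $P_iP_{i+1}$ matches the antipedal vertex associated to that same edge (as illustrated in \cref{fig:5-gon}); and excluding the non-generic positions of $M$ for which some triangle $MP_iP_{i+1}$ degenerates (collinear points, circumcircle at infinity) or the antipedal fails to exist.
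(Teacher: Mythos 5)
Your proof is correct and follows the same route the paper intends: the paper obtains \cref{cor:antipedal} by combining \cref{lem:inverse} and \cref{rem:twice}, exactly as you do. Your only addition is the explicit equivariance step $\C_M(h_\lambda(\P))=h_\lambda(\C_M(\P))$ for homotheties centered at $M$, which the paper leaves implicit but which is indeed needed (and correctly justified by you) to move the factor-of-two homothety through the map.
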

\noindent Recall a well-known result by Johnson \cite[Theorem 2c]{johnson1918-similar}: 

\begin{result*}[Johnson, 1918]
If two polygons $\F$ and $\F'$ with no parallel sides are similar, there exists a point $M$, called the {\em self-homologous point}, such that $\F'$ is a rigid rotation of $\F$ about $M$ followed by uniform scaling about the same point.
\end{result*}

\noindent Using a definition in \cite[Construction 1]{stewart1940}:

\begin{definition}[Miquel Map]
Given a point $M$ and an angle $\theta$, let $\M$ denote a map that sends a polygon $\P=\{P_i\}$, to a new polygon $\P'$ (known as the Miquel polygon) with each vertex $P'_i$ is on the line $P_i P_{i+1}$, and such that $\measuredangle P_i P_i' M = \theta$, for $i=1,\ldots,n$. 
\label{def:miquel}
\end{definition}

\noindent Let $\M^k$ denote $k$ successive applications of the Miquel map. The following key result was proved in \cite[Theorem 2]{stewart1940}:

\begin{theorem*}[Stewart, 1940]
Let $\P$ be a polygon with $n$ sides. $\M^n(\P)$ is similar to $\P$ with $M$ as the self-homologous point. If follows that if $\M^j(\P)$ is similar to $\M^i(\P)$, then   $i \equiv j~(mod~n)$. 
\end{theorem*}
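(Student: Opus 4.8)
The plan is to work in complex coordinates with $M$ at the origin and to realize one step of the Miquel map, restricted to a single vertex, as a spiral similarity centred at $M$; the whole theorem then reduces to showing that the product of $n$ such spirals is independent of the vertex index. Write $\P^{(k)}=\M^k(\P)$ with vertices $P_i^{(k)}$, so that $P_i^{(k+1)}$ lies on the line $P_i^{(k)}P_{i+1}^{(k)}$ and $\measuredangle P_i^{(k)}P_i^{(k+1)}M=\theta$. Set $\psi_i^{(k)}:=\measuredangle M P_i^{(k)} P_{i+1}^{(k)}$, the directed angle at the $i$-th vertex between the cevian to $M$ and the side.

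First I would record the Miquel-point structure: because $P_i^{(k+1)}$ sees $\theta$ toward $P_i^{(k)}$ while $P_{i-1}^{(k+1)}$ sees $\pi-\theta$ toward $P_i^{(k)}$ (the two rays along a common side are opposite), the quadrilateral $M\,P_{i-1}^{(k+1)}\,P_i^{(k)}\,P_i^{(k+1)}$ has opposite angles summing to $\pi$, hence $M$ is concyclic with $P_{i-1}^{(k+1)},P_i^{(k)},P_i^{(k+1)}$. Applying the inscribed-angle theorem in directed-angle form to the chord $MP_i^{(k+1)}$ of this circle gives the key recursion $\psi_{i-1}^{(k+1)}=\psi_i^{(k)}$, and therefore $\psi_i^{(k)}=\psi_{i+k}^{(0)}$ with indices read modulo $n$. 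This ``shift'' is the engine of the whole argument, so I would phrase it throughout with directed angles modulo $\pi$ to avoid case distinctions coming from (non)convexity.

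Next I would turn each single-vertex step into a spiral similarity. In the triangle $MP_i^{(k)}P_i^{(k+1)}$ the angles at $P_i^{(k+1)}$ and $P_i^{(k)}$ are $\theta$ and $\psi_i^{(k)}$, so the law of sines gives $|MP_i^{(k+1)}|/|MP_i^{(k)}|=\sin\psi_i^{(k)}/\sin\theta$ and the rotation angle at $M$ is $\pi-\theta-\psi_i^{(k)}$. In complex form, with $M=0$, this reads $P_i^{(k+1)}=\mu_i^{(k)}P_i^{(k)}$ where $\mu_i^{(k)}=(\sin\psi_i^{(k)}/\sin\theta)\,e^{\,i(\pi-\theta-\psi_i^{(k)})}$ (the orientation being fixed once and for all by $\theta$ and a choice of traversal). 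Composing $n$ steps yields $P_i^{(n)}=\Lambda_i P_i^{(0)}$ with $\Lambda_i=\prod_{k=0}^{n-1}\mu_i^{(k)}$, and substituting the shift $\psi_i^{(k)}=\psi_{i+k}^{(0)}$ turns this product into $\Lambda_i=\sin^{-n}\theta\,\prod_{j=1}^{n}\sin\psi_j^{(0)}\cdot\exp\!\big(i\,[\,n(\pi-\theta)-\sum_{j=1}^n\psi_j^{(0)}\,]\big)$. As $k$ runs over a full period the index $i+k$ sweeps all residues, so every factor is a symmetric function of the fixed set $\{\psi_j^{(0)}\}$ and $\Lambda_i=:\Lambda$ is independent of $i$. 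Hence $\M^n(\P)=\Lambda\cdot\P$ is the image of $\P$ under a single spiral similarity centred at $M$, which is exactly the assertion that $M$ is the self-homologous point, with scale $s=|\Lambda|$ and rotation $\alpha=\arg\Lambda$.

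For the concluding recurrence I would first note that the same computation, started from $\P^{(i)}$, gives $\M^{i+n}(\P)=\Lambda\,\M^i(\P)$ (the data $\{\psi_j\}$ is only cyclically permuted), so similarity of $\M^i$ and $\M^j$ whenever $i\equiv j\pmod n$ is automatic. The reverse implication is where the genuine work---and the main obstacle---lies: one must rule out an ``accidental'' similarity $\M^i(\P)\sim\M^j(\P)$ with $0<|i-j|<n$. Here the partial product $\prod_{k=0}^{j-1}\sin\psi_{i+k}^{(0)}$ over a proper cyclic window genuinely depends on $i$ unless the $\psi_j^{(0)}$ satisfy special coincidences, so $\M^j(\P)$ fails to be a spiral-at-$M$ image of $\P$ for $0<j<n$; invoking Johnson's uniqueness of the self-homologous point (valid under the no-parallel-sides hypothesis) then upgrades this to the full similarity statement. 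I expect the delicate points to be (i) keeping the orientation sign in $\mu_i^{(k)}$ uniform across all $i$ and $k$, and (ii) making the non-degeneracy hypothesis under which the converse holds precise; the forward direction, by contrast, is forced by the telescoping and should go through cleanly.
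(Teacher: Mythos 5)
A point of reference before the critique: the paper does not prove this theorem at all --- it is quoted as an external result, citing Stewart (1940), so your proposal can only be judged on its own merits rather than against an in-paper argument. On those merits, your proof of the first assertion is sound and is essentially the classical pivot argument: the concyclicity of $M$, $P_{i-1}^{(k+1)}$, $P_i^{(k)}$, $P_i^{(k+1)}$, the resulting shift $\psi_{i-1}^{(k+1)}=\psi_i^{(k)}$, and the telescoping of the per-vertex multipliers $\mu_i^{(k)}$ into an $i$-independent product $\Lambda$ are all correct. Your worry (i) about orientation signs in fact dissolves on its own: the quantity $\sin\psi\,e^{-i\psi}$ is invariant under $\psi\mapsto\psi+\pi$, so your multiplier formula is well defined directly for directed angles mod $\pi$, with no case analysis.

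The genuine gap is in the converse (``it follows that \dots''), and it is twofold. First, your multiplier computation only shows that for $0<j<n$ the correspondence $P_i\mapsto P_i^{(j)}$ is not a single spiral similarity \emph{centred at $M$}; it does not exclude a similarity $z\mapsto az+b$ with some other fixed point. Invoking Johnson does not close this hole: Johnson's theorem supplies a self-homologous point for any pair of similar polygons with no parallel sides, but gives no reason whatsoever that this point should coincide with $M$, which is exactly what your argument would need. Second, the converse is actually \emph{false} without a genericity hypothesis, so no argument can succeed while avoiding one: take $\P$ a regular $n$-gon with $n$ odd (so no two sides are parallel), $M$ its centre, and $\theta=\pi/2$; then $\M(\P)$ is the pedal polygon of $\P$ with respect to $M$, i.e.\ the midpoint polygon, which is again a regular $n$-gon similar to $\P$, yet $1\not\equiv 0\pmod{n}$. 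Your phrase ``unless the $\psi_j^{(0)}$ satisfy special coincidences'' correctly senses this, but flagging the obstruction is not resolving it: a complete proof must state the non-degeneracy condition precisely and show that under it both the partial products differ and no spiral about any \emph{other} centre can realize the similarity. As written, the second half of your proposal is a plan, not a proof.
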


\noindent Let $\M_\perp$ denote the Miquel map when $\theta=\pi/2$. \cref{def:miquel} implies that $\M_\perp(\P)$ is the pedal polygon of $\P$ with respect to $M$. Referring to \cref{fig:iterations}:

\begin{corollary}
$\M_\perp^n(\P)$ is self-homologous to $\P$, i.e., it is an image under a rigid rotation and a  uniform scaling of $\P$ about $M$. 
\end{corollary}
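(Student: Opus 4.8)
The plan is to recognize the statement as the special case $\theta = \pi/2$ of Stewart's theorem, combined with the geometric meaning of ``self-homologous point'' supplied by Johnson's result. First I would make explicit the identification, already observed in the text, that $\M_\perp$ is the Miquel map $\M$ with $\theta = \pi/2$: the defining conditions $P_i' \in P_i P_{i+1}$ and $\measuredangle P_i P_i' M = \pi/2$ say exactly that the segment $M P_i'$ meets the sideline perpendicularly, so $P_i'$ is the foot of the perpendicular dropped from $M$ onto side $P_i P_{i+1}$, i.e.\ the $i$-th vertex of the $M$-pedal polygon. Hence $\M_\perp^n(\P) = \M^n(\P)$ when the fixed angle is taken to be $\pi/2$ throughout the iteration.

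Second, I would invoke Stewart's theorem verbatim for this choice of angle. The statement places no restriction on $\theta$, so it applies to $\theta = \pi/2$ and yields that $\M^n(\P)$ is similar to $\P$ with $M$ as the self-homologous point. It then remains only to translate ``self-homologous point'' into the asserted transformation, for which I would quote Johnson's result: for two similar polygons with no parallel sides, the existence of a self-homologous point $M$ is equivalent to one polygon being the image of the other under a rigid rotation about $M$ followed by a homothety centered at the same point. Taking $\F = \P$ and $\F' = \M_\perp^n(\P)$ delivers precisely the rigid rotation and uniform scaling about $M$ claimed in the corollary.

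The only point requiring care — and the step I expect to be the main obstacle — is the bookkeeping around the hypotheses of Johnson's result rather than any genuine computation. Johnson's theorem is stated for polygons with no parallel sides, so I would need either to argue that for a generic starting polygon and generic pole $M$ the iterated pedal polygon $\M_\perp^n(\P)$ has no side parallel to the corresponding side of $\P$, or to observe that Stewart's theorem already packages the self-homologous conclusion, so that the degenerate parallel-side configurations arise as limits of the generic ones and the rotation-plus-scaling description persists by continuity of $s$ and $\alpha$ in $M$. Once this is handled, the corollary follows immediately, with no further calculation.
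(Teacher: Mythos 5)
Your proposal is correct and follows exactly the route the paper intends: the paper offers no separate proof, treating the corollary as immediate from the identification of $\M_\perp$ with the Miquel map at $\theta=\pi/2$ (i.e.\ the $M$-pedal map) together with Stewart's theorem, with Johnson's result supplying the meaning of ``self-homologous point'' --- precisely your first two steps. Your added care about the no-parallel-sides hypothesis in Johnson's result is a reasonable refinement the paper glosses over, but it does not change the argument.
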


\noindent The inverse map $\M_\perp^{-1}(\P)$ yields the $M$-antipedal polygon of $\P$. Referring to \cref{cor:antipedal}:

\begin{remark}
$\C_M(\P)$ is a half-sized homothety of $\M_\perp ^{-1}(\P)$ with respect to $M$.
\end{remark}

Since the pedal transformation has an inverse,
$\M_\perp^{-n}(\P)$ is similar to $\P$. Referring to \cref{fig:iterations}, and noting that the scaling in \cref{cor:antipedal}  does not affect similarity:

\begin{corollary}
$\C_M^n(\P)$ is similar to $\P$.
\label{cor:similar}
\end{corollary}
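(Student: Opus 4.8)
The plan is to express a single step of the circumcenter map as an inverse pedal map followed by a fixed homothety, and then to exploit the fact that homotheties centered at $M$ commute with the pedal construction, so that all of the accumulated scalings can be collected into a single homothety at the very end — where, being a similarity, it does no harm.

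First I would record, from the remark immediately above, that $\C_M = h \circ \M_\perp^{-1}$, where $h$ denotes the homothety centered at $M$ with ratio $1/2$. The central observation I would then establish is that $h$ commutes with $\M_\perp^{-1}$. To see this, I would argue that the pedal map $\M_\perp$ is equivariant under any homothety $h_s$ centered at $M$: dropping a perpendicular from $M$ onto a sideline and then scaling the whole configuration about $M$ by $s$ produces the same foot up to that scaling, so $\M_\perp(h_s \P) = h_s \M_\perp(\P)$. Passing to inverses yields $\M_\perp^{-1} \circ h_s = h_s \circ \M_\perp^{-1}$, which is exactly the commutation needed.

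With the commutation in hand, a short induction gives $\C_M^n = h^n \circ \M_\perp^{-n}$; that is, $\C_M^n(\P)$ is the image of $\M_\perp^{-n}(\P)$ under the single homothety of ratio $(1/2)^n$ centered at $M$. Since $\M_\perp^{-n}(\P)$ is similar to $\P$ (as noted just above, the inverse pedal map applied $n$ times returns a polygon similar to $\P$, via Stewart's result), and since a homothety preserves similarity classes, I would conclude that $\C_M^n(\P)$ is similar to $\P$.

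The step I expect to be the main obstacle is justifying the equivariance and commutation cleanly. I would need to verify carefully that the pedal construction really is homothety-equivariant about $M$ (which hinges on the perpendicular foot scaling correctly under $h_s$), and that the inverse pedal map is sufficiently single-valued for the iterates $\M_\perp^{-n}$ and the powers of $\C_M$ to be unambiguous. Everything after that is formal bookkeeping of the accumulated scale factor, which by \cref{cor:antipedal} cannot disturb similarity.
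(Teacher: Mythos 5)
Your proof is correct and follows essentially the same route as the paper: decompose $\C_M$ as the half-ratio homothety about $M$ composed with $\M_\perp^{-1}$ (the paper's \cref{cor:antipedal} and the remark following it), invoke Stewart's theorem to get that $\M_\perp^{-n}(\P)$ is similar to $\P$, and observe that homothety does not disturb similarity. The only difference is that you explicitly prove the commutation $h \circ \M_\perp^{-1} = \M_\perp^{-1} \circ h$ needed to collect the interleaved scalings into $h^n \circ \M_\perp^{-n}$ — a step the paper leaves implicit — which is a welcome tightening rather than a different approach.
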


\begin{figure}
    \centering
    \includegraphics[width=\textwidth]{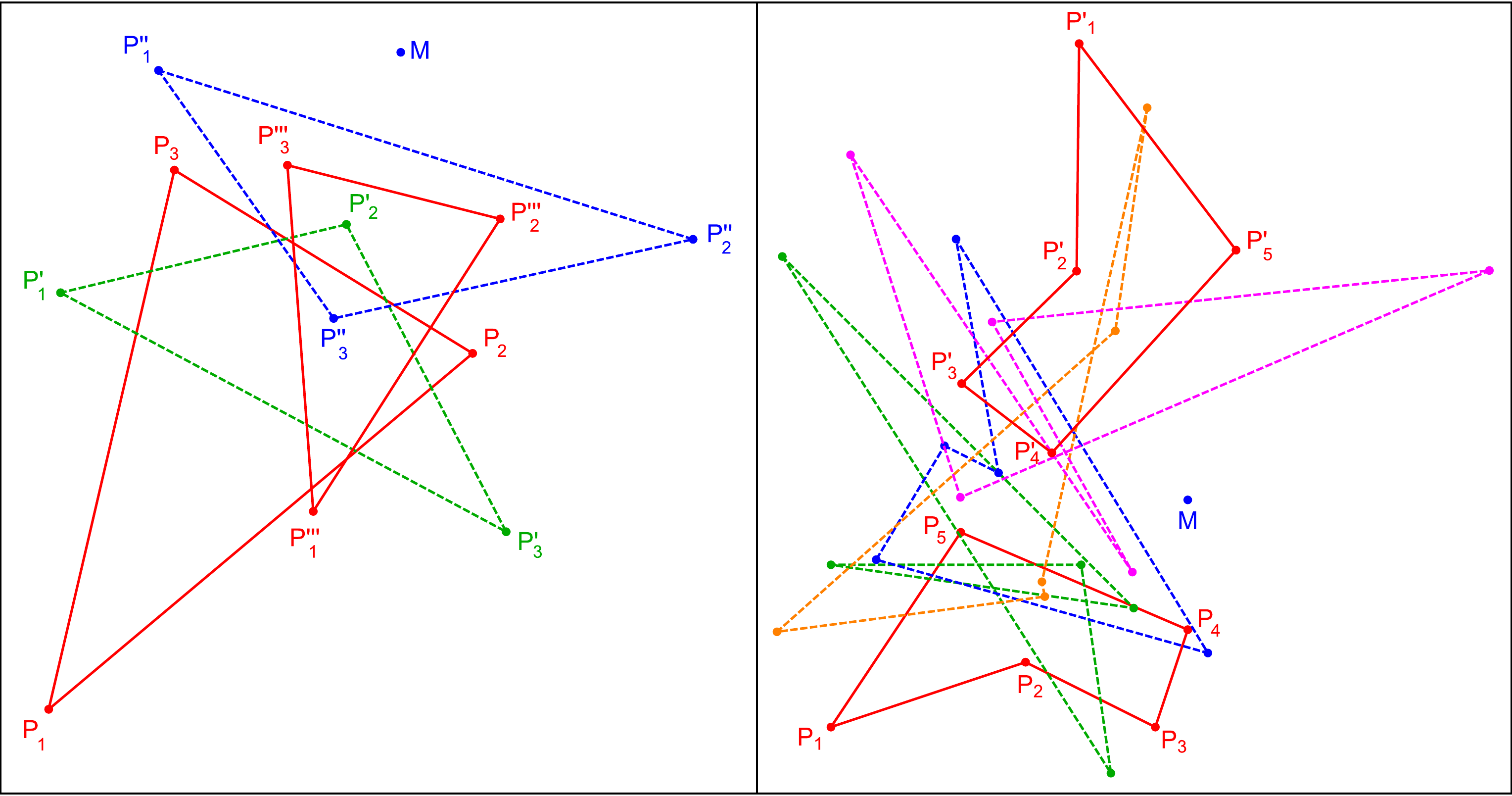}
    \caption{\textbf{Left:} Applying the circumcenter map with respect to $M$ to a starting triangle $P_1 P_2 P_3$ (solid red) yields the (dashed green) triangle $P_1' P_2' P_3'$. In turn, applying the map on the latter yields the (dashed blue) triangle $P_1'' P_2'' P_3''$. Finally, a third application of the map yields $P_i'''$ (solid red), similar to the original. \textbf{Right:} Likewise, starting from a pentagon $P_i$, i=$1,\ldots,5$, $n=5$ applications of the map yields $P_i'$ (dashed red), self-homologous to the original. Intermediate generations are colored dashed green, blue, orange, and magenta.}
    \label{fig:iterations}
\end{figure}

Let $\P'=\C_M^{n}(\P)$, and $\P''=\C_M^{2n}(\P')$. Express these as $ \P'=\T( \P)$, and $ \P''=\T'(\P')$, where $\T,\T'$ are similarity transforms (that is, composition of a rotation and   scaling about $M$).
\begin{proposition}
$\T=\T'$.
\end{proposition}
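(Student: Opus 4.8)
The plan is to exploit the fact that the circumcenter map is \emph{equivariant} under spiral similarities centered at $M$. Writing points in complex coordinates with $M$ at the origin, every rotation-plus-scaling about $M$ has the form $\T(z)=c\,z$ for some $c=s\,e^{i\alpha}$. I would first establish the commutation identity
\begin{equation}
\C_M\bigl(\T(\P)\bigr)=\T\bigl(\C_M(\P)\bigr)
\label{eq:equivariance}
\end{equation}
for every such $\T$. This is the geometric heart of the argument: the $i$-th vertex of $\C_M(\T(\P))$ is the circumcenter of the triangle $M,\T(P_i),\T(P_{i+1})$, and since $\T$ fixes $M$ this triangle is precisely the $\T$-image of the triangle $M,P_i,P_{i+1}$. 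Because the circumcenter construction commutes with similarities (the circumcenter of the image of a triangle is the image of its circumcenter), that vertex equals $\T$ applied to the circumcenter of $M,P_i,P_{i+1}$, i.e.\ to the $i$-th vertex of $\C_M(\P)$. Iterating \eqref{eq:equivariance} yields $\C_M^{\,n}\circ\T=\T\circ\C_M^{\,n}$.

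With this in hand the proposition follows quickly. By \cref{cor:similar} together with Johnson's result, $\P'=\C_M^{\,n}(\P)=\T(\P)$ with $\T$ a spiral similarity about $M$, and likewise $\P''=\T'(\P')$ for the second block of $n$ iterations. I would then compute $\P''$ two ways:
\begin{equation}
\P''=\C_M^{\,n}(\P')=\C_M^{\,n}\bigl(\T(\P)\bigr)=\T\bigl(\C_M^{\,n}(\P)\bigr)=\T(\P'),
\end{equation}
where the third equality is the commutation identity, while by definition $\P''=\T'(\P')$. Hence $\T$ and $\T'$ agree on the polygon $\P'$. Since a spiral similarity centered at $M$ is determined by its action on a single point distinct from $M$ (in the coordinates above it is the scalar $c$, recovered from the image of any one nonzero vertex of $\P'$), this forces $\T=\T'$; that is, the two consecutive blocks of $n$ iterations induce the same rotation $\alpha$ and the same scaling $s$.

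The step I expect to demand the most care is the equivariance identity \eqref{eq:equivariance}, specifically verifying that the circumcenter construction commutes with a general orientation-preserving $\T$, not merely with a pure rotation or a pure homothety. Writing the circumcenter of $\{0,u,v\}$ explicitly in complex form as a homogeneous ratio that transforms by the factor $c$ under $z\mapsto c\,z$ makes this transparent, and it also lets me confirm that the degenerate configurations (collinear $M,P_i,P_{i+1}$, or a vertex coinciding with $M$) do not spoil the argument. A minor secondary point is to check that $\P'$ is nondegenerate enough to pin down the similarity uniquely, which holds whenever $M$ is in general position relative to $\P$.
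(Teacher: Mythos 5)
Your proposal is correct and follows essentially the same route as the paper: both rest on the equivariance $\C_M^n(\T(\P))=\T(\C_M^n(\P))$ of the circumcenter map under the spiral similarity $\T$ fixing $M$, from which $\C_M^n(\P')=\T(\P')$ and hence $\T=\T'$. Your write-up is simply more explicit on two points the paper leaves implicit---that the equivariance depends crucially on $\T$ fixing $M$, and that a similarity centered at $M$ is pinned down by its action on a single point of $\P'$.
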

\begin{proof}
Since $\P'= \T(\P)$, then $\C_M^n(\P') = \C_M^n(\T (\P))$. By definition (see \cref{sec:intro}), the circumcenter map is based on the circumcenters of subtriangles of a given triangle. Since circumcenters are {\em triangle centers} (see \cite{kimberling1993_rocky}), they are equivariant over similarity transforms, therefore, $\C_M^n(\T( \P))=\T(\C_M^n(P))=\T^2(\P)$, i.e., $\C_M^n(\P')=\T(\P')$. 
\end{proof}

In \cref{fig:n4n5-reg,fig:aliens456}, we illustrate a statement by Stewart in \cite{stewart1940}, namely, that  intermediate applications of the map produce  polygons ``as diverse in shape as is imaginable''.

\begin{figure}
    \centering
    \includegraphics[width=.8\textwidth]{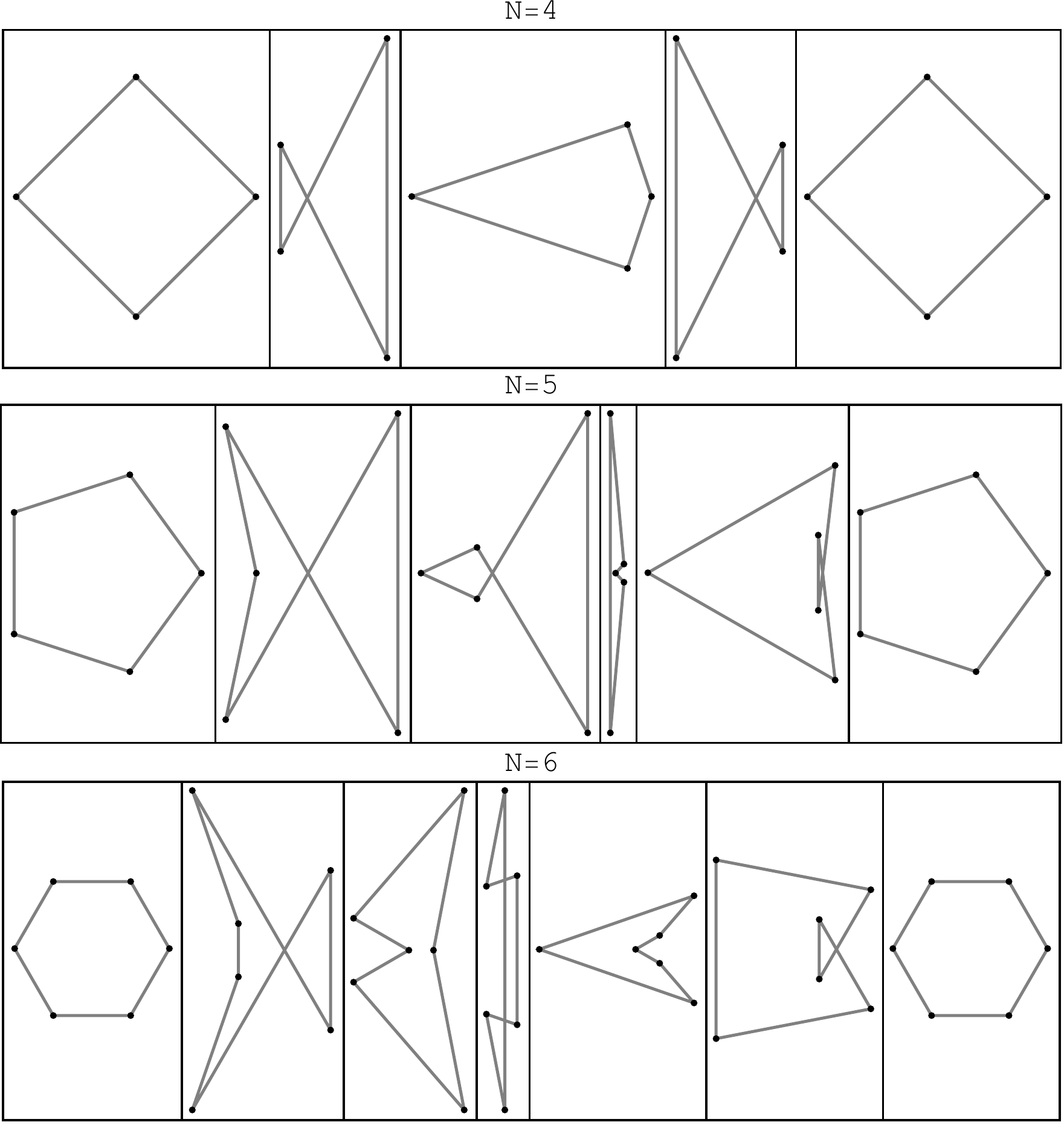}
    \caption{Intermediate shapes (with scale and rotation removed) produced by sequential applications of the circumcenter map. Top, middle, bottom strips show sequence starting from square, regular pentagon, and regular hexagon, respectively. In each case, the rightmost vertex of the starting polygon is at $(1,0)$, and $M$ (not shown) is at $(2,0)$.}
    \label{fig:aliens456}
\end{figure}

\section{The \torp{$n=3$}{n=3} and \torp{$n=4$}{n=4} cases}
\label{sec:n3}
In this section we study the locus of $M$ such that $\C_M^n(\P)$ is  area-preserving ($s=1$) and/or rotation-neutral ($\alpha=0$), for the cases where $\P$ is an equilateral or a square.

Let $\P=ABC$ be a triangle, and 
$\P'=\C_M^3(\P)=A'B'C'$. Let $\A(\Q)$ denote the area of a polygon $\Q$. Via CAS, we obtain the following proposition.
\begin{proposition}
The ratio of sides $A'B'$, $B'C'$ and $A'C'$ of $\P' $ and $\P$ is given by:
\begin{align*}
 \frac{|A'B'|}{|AB|}=\frac{|A'C'|}{|AC|}=\frac{|B'C'|}{|BC|} =\frac{{l_a l_b l_c m_a m_b m_c}}{8~\A(ABM)\A(BCM)\A(ACM)}
\end{align*}
where $l_a=|BC|$, $l_b=|AC|$, $l_c=|AB|$, $m_a=|AM|$, $m_b=|BM|$, $m_c=|CM|$.
\end{proposition}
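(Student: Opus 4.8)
The equality of the three displayed ratios is immediate. By \cref{cor:similar}, $\P'=\C_M^3(\P)$ is similar to $\P$; moreover the preceding corollaries identify this similarity as a rotation-and-scaling about the self-homologous point $M$, so it carries $A,B,C$ to $A',B',C'$ with a single ratio $s$. Hence $|A'B'|/|AB|=|B'C'|/|BC|=|A'C'|/|AC|=s$, and the entire content of the Proposition is the evaluation of the scalar $s$.

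Because $\C_M^3$ is a spiral similarity centered at $M$, it scales every distance to $M$ by $s$: $|MA'|=s\,|MA|$, and likewise for $B,C$. Multiplying,
\[
 s^{3}=\frac{|MA'|\,|MB'|\,|MC'|}{|MA|\,|MB|\,|MC|}.
\]
The reason to pass to products of vertex-distances is that one application of $\C_M$ acts transparently on them. Each vertex of $\C_M(\Q)$ is the circumcenter of a sub-triangle $MQ_iQ_{i+1}$, and $M$ lies on its circumcircle, so the distance from $M$ to that vertex is exactly the sub-triangle's circumradius $\tfrac{|MQ_i||MQ_{i+1}||Q_iQ_{i+1}|}{4\,\A(MQ_iQ_{i+1})}$. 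Taking the product over the three vertices, the factors $|MQ_i|$ telescope, and one gets a one-step multiplier
\[
 g(\Q):=\frac{\prod_i|M(\C_M\Q)_i|}{\prod_i|MQ_i|},\qquad
 g(\P)=\frac{m_a m_b m_c\, l_a l_b l_c}{c\,\A(ABM)\A(BCM)\A(ACM)},
\]
exactly the length-over-area expression of the Proposition, the constant $c$ being fixed by the three circumradius denominators together with the area normalization. Chaining this over the orbit gives the exact identity $s^{3}=g(\P)\,g(\P')\,g(\P'')$.

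The main obstacle is the final collapse $g(\P')\,g(\P'')=g(\P)^{2}$, which turns $s^{3}=g(\P)g(\P')g(\P'')$ into $s=g(\P)$ and so into the stated formula. This is genuinely the hard step and is not forced by \cref{cor:similar} alone: $g$ is scale- and rotation-invariant, hence an invariant of the shape of $(\Q,M)$, and although similarity makes the orbit-values $g(\P),g(\P'),g(\P'')$ repeat with period dividing $3$, the intermediate polygons $\P',\P''$ are \emph{not} similar to $\P$ --- following Stewart they are ``as diverse as imaginable'' --- so one really must show $g$ is conserved across a single genuine application of the map (equivalently, that $\prod_i|MP_i^{(k)}|$ is a geometric sequence). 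To do this I would set $M$ at the origin and use the circumcenter formula $w(a,b)=\dfrac{ab\,\overline{(b-a)}}{a\bar b-\bar a b}$, whose denominator is $-4i$ times the signed area of $Mab$ --- this is precisely what seeds the sub-triangle areas into the answer. Writing $\P',\P''$ as successive iterates of this rational map in $z_i,\bar z_i$ and simplifying $g(\P')g(\P'')$ to $g(\P)^2$ is exactly the calculation the CAS discharges; performed by hand it is the crux, since it means carrying the full coordinates of both intermediate triangles through two compositions of the map and exhibiting the cancellation. As an independent check one can instead compute $s^{2}=\A(\P')/\A(\P)$ using \cref{cor:antipedal} together with the classical pedal-area formula, and verify that it squares the same expression.
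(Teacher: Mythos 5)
Your argument is sound in structure and takes a genuinely different route from the paper, which gives no derivation at all: the proposition is prefaced by ``Via CAS, we obtain\ldots'' and, per the introduction, its proof is omitted --- i.e., the authors symbolically compose the map three times and read off the side ratios. You instead reduce everything to one invariance statement, and the reduction itself is correct: \cref{cor:similar} (with Stewart's theorem making $M$ the self-homologous point) gives the equality of the three ratios and $s^3=\prod_i|MP_i'''|/\prod_i|MP_i|$; the circumradius formula $R=\frac{abc}{4K}$ applied to the sub-triangles $MP_iP_{i+1}$ shows that each application of $\C_M$ multiplies $\prod_i|MP_i|$ by your $g$, which is exactly where the side lengths and sub-triangle areas in the statement originate; and the proposition then reduces to the identity $g(\C_M\P)\,g(\C_M^2\P)=g(\P)^2$, which, as you rightly insist, similarity alone cannot supply, since the intermediate triangles are not similar to $\P$. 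That identity is true --- for the equilateral with $M=(2,0)$ one computes $g=7/5$ at three consecutive iterates, and indeed $s=7/5$ --- but you do not prove it; you hand it to the CAS, so your proof sits at the same evidentiary level as the paper's, not above it. What your route buys is a smaller computation (two compositions of the map instead of three) and an explanation of the formula's shape rather than a brute-force confirmation of it. It also surfaces something the paper obscures: the constant. Your derivation forces $4^3=64$ in the denominator when $\A$ denotes ordinary area, and the printed $8$ is consistent only if $\A$ is read as twice the area; for instance, with $M$ at the centroid of the equilateral the map is $2$-periodic, so $s=1$, which matches the $64$-normalization but not the displayed formula with ordinary areas. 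So your cautious remark about ``area normalization'' is not a hedge but points at a genuine discrepancy in the statement as printed.
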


\begin{proposition}
Let $\alpha$ denote the angle of rotation of $\P'$ with respect to $\P$. Then:
\[ \cos\alpha=\frac{ m_c^2(m_a^2   + m_b^2  )\A(ABM) +m_b^2 (m_a^2   +   m_c^2)\A(ACM)  +m_a^2 (  m_b^2 +   m_c^2)\A(BCM)}{l_al_bl_c m_am_bm_c}\]
 
\end{proposition}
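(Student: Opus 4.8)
The plan is to pass to complex coordinates with $M$ at the origin, writing $a,b,c\in\Cp$ for $A,B,C$. Solving $\ol w\,z_j+w\,\ol{z_j}=|z_j|^2$ for $j=1,2$ gives the circumcenter of the sub-triangle $\{0,z_1,z_2\}$ in closed form,
\[
w(z_1,z_2)=\frac{z_1 z_2(\ol{z_1}-\ol{z_2})}{\ol{z_1}z_2-z_1\ol{z_2}},
\]
whose denominator is $4i$ times the signed area of $\{0,z_1,z_2\}$; applying this to the three sub-triangles writes the vertices of $\C_M(\P)$ explicitly in $a,b,c,\ol a,\ol b,\ol c$. By \cref{cor:similar}, $\C_M^3$ is a similarity about $M$, so there is a single $\zeta=s\,e^{i\alpha}\in\Cp$ with $\C_M^3$ acting as multiplication by $\zeta$ on displacements from $M$. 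This structural fact is what makes the problem finite: everything reduces to one complex number.

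Since $\mathrm{Re}(\zeta)=s\cos\alpha$ and $s$ is supplied by the previous proposition (with denominator $8\,\A(ABM)\A(BCM)\A(ACM)$, exactly what converts into the stated denominator $l_al_bl_cm_am_bm_c$), it suffices to compute $\mathrm{Re}(\zeta)$ and divide by $s$. Because $\zeta$ is vertex-independent I may read it off a single vertex as $\zeta=a'''/a$, so that $\mathrm{Re}(\zeta)=\mathrm{Re}(a'''\ol a)/m_a^2$; the redundant identities $a'''/a=b'''/b=c'''/c$ then serve as correctness checks. The target becomes the identity
\[
\mathrm{Re}(\zeta)=\frac{m_a^2(m_b^2+m_c^2)\A(BCM)+m_b^2(m_c^2+m_a^2)\A(ACM)+m_c^2(m_a^2+m_b^2)\A(ABM)}{8\,\A(ABM)\A(BCM)\A(ACM)}.
\]

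To finish I would substitute $w$ into itself three times and take the real part, translating $|a|\mapsto m_a$, $|b-c|\mapsto l_a$, and signed areas into $\A(\cdot)$ under a fixed orientation. The shape of the answer is already visible in the one-step product identity
\[
a'b'c'=\frac{(abc)^2\,\ol{(a-b)(b-c)(c-a)}}{-64\,i\,S_aS_bS_c},
\]
($S_a,S_b,S_c$ the signed sub-triangle areas), which exhibits precisely the distances, side lengths, and areas occurring in the claim. The main obstacle is the symbolic blow-up: each iteration roughly squares the degree in the six variables, and the conjugates obstruct naive cancellation, so a CAS is essentially mandatory. One way to tame this is to replace circumcenters by pedal feet: from the relation $\C_M=\tfrac12 H_M\circ\M_\perp^{-1}$ (with $H_M$ the half-homothety about $M$, which commutes with $\M_\perp^{-1}$) one gets $\C_M^3=\tfrac18\,\M_\perp^{-3}$, so $\alpha$ equals, up to sign, the rotation of $\M_\perp^{3}$ and hence $\cos\alpha$ is unchanged. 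Iterating the much simpler foot formula $\dfrac{-2i\,[0,z_1,z_2]}{\ol{z_2-z_1}}$ three times is then the cleaner computational route to the same $\cos\alpha$.
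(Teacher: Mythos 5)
Your framework is sound, and it is essentially the paper's own route (the paper offers nothing beyond ``via CAS''): the complex circumcenter formula with its $4i\cdot(\text{signed area})$ denominator, the identification of $\C_M^3$ with multiplication by a single $\zeta=s\,e^{i\alpha}$, the one-step product identity, and the reduction $\C_M^3=\tfrac18\,\M_\perp^{-3}$ are all correct, and the vertex-correspondence ambiguity in reading $\zeta=a'''/a$ is adequately covered by your cross-check $a'''/a=b'''/b=c'''/c$.

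The step that would fail is precisely the one you defer to the machine: ``substitute, take the real part, and confirm the stated identity.'' That identity is not confirmable, because the printed right-hand side cannot be a cosine. Test it at $M=K_1=(1+\sqrt{3},0)$ with the equilateral $A=(1,0)$, $B=(-\tfrac12,\tfrac{\sqrt3}{2})$, $C=(-\tfrac12,-\tfrac{\sqrt3}{2})$: the paper's canonical-triangle propositions assert the map is 3-periodic there, and since a direct similarity about $K_1$ carrying the equilateral onto itself must fix its centroid, that similarity is the identity, so $\cos\alpha=1$. But with $m_a^2=3$, $m_b^2=m_c^2=6+3\sqrt3$, $l_al_bl_c=3\sqrt3$, $\A(ABM)=\A(ACM)=\tfrac34$, $\A(BCM)=\tfrac{3(2+\sqrt3)}{4}$, the stated expression evaluates to $(5+2\sqrt3)/2\approx 4.23>1$, and no assignment of signs to the three areas brings it to $1$. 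More globally, the expression grows like $|M|^2$ as $M\to\infty$, so it is unbounded, which no cosine is; the statement as printed is garbled. The companion side-ratio proposition, which your reduction $\cos\alpha=\mathrm{Re}(\zeta)/s$ takes on faith, fails the same test: it yields $s=8$ at $K_1$ and at the centroid, where $s=1$ (its ``$8$'' is evidently a mistranscribed ``$64$'', three lost factors of $2$ from the per-step half-homothety), and this error propagates into your target identity $\mathrm{Re}(\zeta)=[\text{numerator}]/(8\,\A\A\A)$. So your machinery, run honestly, would compute the true $\cos\alpha$ and thereby refute rather than prove the proposition. The concrete gap in your plan is the absence of exactly this kind of cheap numerical sanity check before outsourcing to the CAS; with it you would have seen that the job is not to confirm the printed formula but to correct it.
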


\subsection*{The case of an equilateral triangle} Let $\R=ABC$ be the equilateral triangle with vertices $A=(1,0)$,  $B=(-1,\sqrt{3})/2$, $C=(-1,-\sqrt{3})/2$. Let $\R'=\C_M^3(\R)$ with $M=(x_m,y_m)$. Let $s=\A(\R')/\A(\R)$. Via CAS, we obtain the following proposition.  


\begin{proposition}
If $\P$ is an equilateral, $s=1$ if $M=(x_m,y_m)$ satisfies:
\begin{align*}
  & 3\,x_m^{6}-
y_m^{6}-12\,x_m^{5}+9\,y_m^{4}+
\left( -27\,y_m^{2}+9
 \right) x_m^{4}+ \left( 24\,y_m^{2}+6 \right) x_m^{3}+\nonumber\\& \left( 33\,y_m^{4}+18\,y_m^{2}-6 \right) x_m^{2}+
 \left( 36\,y_m^{4}-18\,y_m^{2} \right) x_m-6\,y_m^{2}=0
\end{align*}
\label{prop:equi6}

\end{proposition}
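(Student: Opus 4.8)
The plan is to convert the geometric condition $s=1$ into an algebraic one by combining the two facts already available for a triangle: \cref{cor:similar}, which says $\R'=\C_M^3(\R)$ is similar to $\R$, and the explicit side-ratio proposition stated above. Since the two triangles are similar with some linear ratio $r>0$, the area ratio is exactly $s=r^2$; and that proposition expresses $r$ as the product of the six lengths $l_a l_b l_c\, m_a m_b m_c$ divided by a numerical constant times the product of the three sub-triangle areas $\A(ABM)\,\A(BCM)\,\A(ACM)$. The key point is that, although $r$ itself carries square roots (each $l_\bullet$ and each $m_\bullet$ is the square root of a quadratic), the quantity $s=r^2$ is a \emph{rational} function of $(x_m,y_m)$: it involves only the squared side lengths $l_a^2=l_b^2=l_c^2=3$, the squared distances $m_a^2,m_b^2,m_c^2$ (each quadratic in $x_m,y_m$), and the squared areas $\A(ABM)^2,\A(BCM)^2,\A(ACM)^2$ (each the square of the linear shoelace form). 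Hence $s=1$ is equivalent to a polynomial equation, with no radicals left.

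Concretely, I would substitute $A=(1,0)$, $B=(-\tfrac12,\tfrac{\sqrt3}{2})$, $C=(-\tfrac12,-\tfrac{\sqrt3}{2})$ and $M=(x_m,y_m)$, so that $m_a^2=(x_m-1)^2+y_m^2$, $m_b^2=(x_m+\tfrac12)^2+(y_m-\tfrac{\sqrt3}{2})^2$, $m_c^2=(x_m+\tfrac12)^2+(y_m+\tfrac{\sqrt3}{2})^2$, and write each signed area as a degree-one polynomial via the shoelace determinant. Setting the numerator of $r^2$ equal to its denominator and clearing gives a single polynomial $P(x_m,y_m)=0$. Both $m_a^2 m_b^2 m_c^2$ and $\A(ABM)^2\A(BCM)^2\A(ACM)^2$ are degree-$6$ polynomials, so $P$ has degree at most $6$; expanding, collecting the resulting monomials (the step best handed to a CAS, as the paper notes), and dividing by the overall constant should reproduce the displayed sextic.

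Several inexpensive checks guide the computation. Because the equilateral is symmetric about the $x$-axis while $B$ and $C$ are swapped by $y_m\mapsto -y_m$, the whole configuration, and hence $s$, is invariant under this reflection, so $P$ must be even in $y_m$; this matches the fact that only even powers of $y_m$ appear. The top-degree part can be found by hand from the asymptotics $m_a^2m_b^2m_c^2\sim(x_m^2+y_m^2)^3$ and the linear growth of each area, and it collapses, up to a nonzero constant, to $3x_m^6-27x_m^4y_m^2+33x_m^2y_m^4-y_m^6$, which is exactly the degree-$6$ part of the stated equation; this nonzero form also certifies that $P$ does not drop below degree $6$. Finally, the centroid $M=(0,0)$ must lie on the locus: there the map sends the equilateral to successive $60^\circ$ rotations of itself, all of the same size, so $s=1$ (consistent with the $2$-periodicity at the centroid noted in the introduction), and indeed the displayed polynomial vanishes at the origin.

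The obstacle is computational rather than conceptual: the genuine work is the degree-$6$ symbolic expansion of the two products and the careful collection of terms, which is why a CAS is used. One should also confirm that no common polynomial factor lowers the degree of $P$ (the non-vanishing of the leading form above settles this) and that clearing the denominator introduces no extraneous component. The latter holds because the zero set of $(\text{numerator}-\text{denominator})$ agrees with $\{s=1\}$ except possibly where the denominator vanishes, and there the numerator $m_a^2m_b^2m_c^2$ vanishes only at the vertices of $\R$; thus no spurious curve is created and the $s=1$ locus is precisely the displayed sextic.
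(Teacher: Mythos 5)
Your outline is the natural one and, modulo one essential point, it does lead to the stated sextic: similarity gives $s=r^2$; squaring the side-ratio formula leaves a rational function of $(x_m,y_m)$; clearing denominators gives a polynomial of degree at most $6$; and your treatment of extraneous components is correct (the denominator vanishes only on the sidelines, where the numerator vanishes only at the vertices of $\R$, and the vertices do lie on the sextic). Since the paper itself offers nothing beyond ``via CAS'' --- presumably a direct symbolic iteration of the explicit map of \cref{app:map-tri} --- your route through the side-ratio proposition is a reasonable reconstruction.

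The gap is your treatment of the numerical constant in that proposition. It is not an ``overall constant'' that can be divided out at the end: in the cleared equation $27\,m_a^2m_b^2m_c^2-c^2\,\A(ABM)^2\A(BCM)^2\A(ACM)^2=0$, the constant $c$ enters the \emph{difference}, so changing $c$ changes the curve, not merely the normalization of $P$. And the printed constant $c=8$ is in fact inconsistent with \cref{prop:equi6}: taken literally it gives $27\,m_a^2m_b^2m_c^2=64\,\A(ABM)^2\A(BCM)^2\A(ACM)^2$, which fails at the centroid $M=(0,0)$ (it reads $27=27/64$) even though $s=1$ there by the $2$-periodicity you yourself invoke; moreover its leading form is $27\bigl(\tfrac{15}{16}x_m^6+\tfrac{27}{8}x_m^4y_m^2+\tfrac{39}{16}x_m^2y_m^4+y_m^6\bigr)$, which is positive definite and therefore not proportional to $3x_m^6-27x_m^4y_m^2+33x_m^2y_m^4-y_m^6$ as you assert. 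So the two sanity checks you propose would \emph{fail}, not pass, if actually carried out as written --- they are exactly the right checks, but they expose an inconsistency between the paper's side-ratio proposition and \cref{prop:equi6} rather than confirming your derivation. The repair: the correct constant for true areas is $64$ (equivalently, the printed formula holds if its ``areas'' are read as shoelace determinants, i.e.\ twice the areas); it can be pinned down by evaluating at the centroid, where $r=1$ and all six lengths and three areas are explicit. With that constant, $s=1$ becomes $m_a^2m_b^2m_c^2=(2x_m+1)^2\bigl(3y_m^2-(x_m-1)^2\bigr)^2$, and expanding (collecting powers of $y_m$) reproduces the displayed sextic exactly, with leading form $-\bigl(3x_m^6-27x_m^4y_m^2+33x_m^2y_m^4-y_m^6\bigr)$. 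Your plan succeeds only after this correction; as written, the step ``expanding \ldots should reproduce the displayed sextic'' does not go through.
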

\cref{fig:equi-its,fig:n3-neutral}, illustrate sequential applications of the circumcenter map starting from an equilateral, for the cases of $M$ in an area-contracting, area-expanding, and the boundary in between them defined in \cref{prop:equi6}.

\begin{proposition}
If $\P$ is an equilateral, $\alpha=0$ when $M=(x_m,y_m)$ satisfies:
\[y_m(y_m^2-3x_m^2)=0\]
\label{prop:alpha0-3}
\end{proposition}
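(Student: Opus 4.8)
The plan is to work in complex coordinates and reduce the statement to the vanishing of the imaginary part of the multiplier of the similarity $\T=\C_M^3$. Write $A=1$, $B=\omega$, $C=\omega^{2}$ with $\omega=e^{2\pi i/3}$ a primitive cube root of unity, so that the centroid of $\R$ sits at the origin, and set $m=x_m+iy_m$. The first observation is purely algebraic: since $\operatorname{Im}(m^{3})=3x_m^{2}y_m-y_m^{3}$, the claimed locus is exactly
\[ y_m(y_m^{2}-3x_m^{2})=-\operatorname{Im}(m^{3})=0, \]
i.e. the union of the three lines through the centroid at arguments $0,\pi/3,2\pi/3$ (separated by $\pi/n$ with $n=3$, matching the general regular-$n$-gon claim). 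These are precisely the three axes of symmetry of $\R$: the $x$-axis through $A$ and the lines $y=\pm\sqrt{3}\,x$ through $B$ and $C$. By \cref{cor:similar} together with the proposition that $\T=\T'$, the map $\T$ is a \emph{direct} similarity $z\mapsto M+\lambda(z-M)$ with $\lambda=s\,e^{i\alpha}$, so a rotation-neutral image ($\sin\alpha=0$, i.e. $\alpha\equiv0\pmod\pi$) occurs exactly when $\lambda$ is real.

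Next I would express the circumcenter map itself in these coordinates. Translating the common vertex $M$ to the origin via $p_i=P_i-m$, the circumcenter of $M P_i P_{i+1}$ equals $M+O_i$ where
\[ O_i=\frac{p_i\,p_{i+1}\,(\ol{p_i}-\ol{p_{i+1}})}{\ol{p_i}\,p_{i+1}-p_i\,\ol{p_{i+1}}}, \]
the denominator being $2i\,\operatorname{Im}(\ol{p_i}\,p_{i+1})$, hence proportional to the signed area of $M P_i P_{i+1}$. Thus in the $p$-coordinates the circumcenter map is the rational map $p_i\mapsto O_i$, and I would iterate it three times starting from $p_i=P_i-m$. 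The self-homology of \cref{cor:similar} guarantees that after three iterations $p_i'''=\lambda\,p_i$ for a single $\lambda$ independent of $i$; computing $\lambda=p_1'''/p_1$ and clearing denominators presents $\operatorname{Im}\lambda$ as a rational function of $m,\ol m$ whose numerator is a real polynomial in $x_m,y_m$. I expect this numerator to factor as a strictly positive expression times $\operatorname{Im}(m^{3})$, which yields the stated cubic. The labour is cut down by the order-three rotational symmetry $m\mapsto\omega m$ of the configuration, under which the intrinsic multiplier $\lambda$ is invariant and $\operatorname{Im}(m^{3})$ is preserved (since $\omega^{3}=1$), so only one angular sector of the computation is genuinely new.

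As an independent check of the forward direction, and to explain why the natural condition is $\sin\alpha=0$ rather than literally $\alpha=0$, I would invoke equivariance. The circumcenter is a metric construction, so $\C_{g(M)}(g\P)=g\,\C_M(\P)$ for every isometry $g$. Taking $g=\sigma$ a reflection in an axis of symmetry $L$ of $\R$ with $M\in L$ fixes both $M$ and the (unoriented) triangle; conjugating the direct similarity $z\mapsto M+\lambda(z-M)$ by the orientation-reversing $\sigma$ turns the multiplier into $\ol\lambda$, so equivariance forces $\lambda=\ol\lambda$ and hence $\sin\alpha=0$ on each of the three axes. This recovers the three lines but only up to the sign of $\cos\alpha$: at the centroid $M=0$, which lies on all three lines, the map is $2$-periodic, i.e. $\lambda=-1$ and $\alpha=\pi$, perfectly consistent with $\operatorname{Im}(m^{3})=0$ but not with $\alpha$ literally vanishing.

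The main obstacle is the computational one of pushing the three-fold iteration of the rational map $p_i\mapsto O_i$ through to a closed form for $\operatorname{Im}\lambda$ and then verifying that the cofactor accompanying $\operatorname{Im}(m^{3})$ is nowhere zero on the real plane, so that the zero set is \emph{exactly} these three lines and contains no further components. This final nonvanishing check is what upgrades the symmetry argument's ``if'' into the full locus description, and it is where a CAS is essentially unavoidable; the $\omega$-symmetry and the a priori factor $\operatorname{Im}(m^{3})$ are precisely what keep the expressions manageable.
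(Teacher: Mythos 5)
Your plan is sound, and it is in fact more informative than what the paper itself provides: the paper's ``proof'' of \cref{prop:alpha0-3} is an omitted CAS computation (the section only says ``Via CAS, we obtain the following proposition,'' presumably run in Cartesian coordinates on the explicit formulas of \cref{app:map-tri}). Your computational core is the same as the paper's --- iterate the map three times, extract the multiplier $\lambda=s\,e^{i\alpha}$ of the similarity $\C_M^3$ guaranteed by \cref{cor:similar}, and set its imaginary part to zero --- but two of your ingredients are genuine additions. First, the reflection-equivariance argument: conjugating the direct similarity $\C_M^3$ by the reflection $\sigma$ in an axis of symmetry of $\R$ containing $M$ replaces $\lambda$ by $\ol{\lambda}$, while vertex-by-vertex equivariance of the circumcenter construction gives $\sigma\,\C_M^3\,\sigma=\C_M^3$, so $\lambda$ is real on the three axes with no computation at all; this settles the containment of the three lines in the $\sin\alpha=0$ locus synthetically, leaving only the reverse containment (your nonvanishing-cofactor check) to the CAS. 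Second, you are more careful than the paper about the sign: at the centroid the map is $2$-periodic (the paper's own remark), so $\C_M^3$ there is a point reflection and $\alpha=\pi$; hence $y_m(y_m^2-3x_m^2)=0$ is really the locus of $\alpha\equiv 0\pmod{\pi}$, and the literal $\alpha=0$ set is a union of arcs of these lines (containing $K_1,K_2$, where the map is $3$-periodic). One cautionary note: in the equivariance step you should track the vertex labelling explicitly, since two distinct direct similarities can carry an equilateral triangle onto the same image triangle (they differ by a rotational symmetry of $\R$); following the labels through the three iterations, which the circumcenter construction permits since it is defined trianglewise, is exactly what forces $\sigma\,\C_M^3\,\sigma$ and $\C_M^3$ to agree as maps rather than merely to have the same image.
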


\begin{figure}
    \centering
    \includegraphics[width=\textwidth]{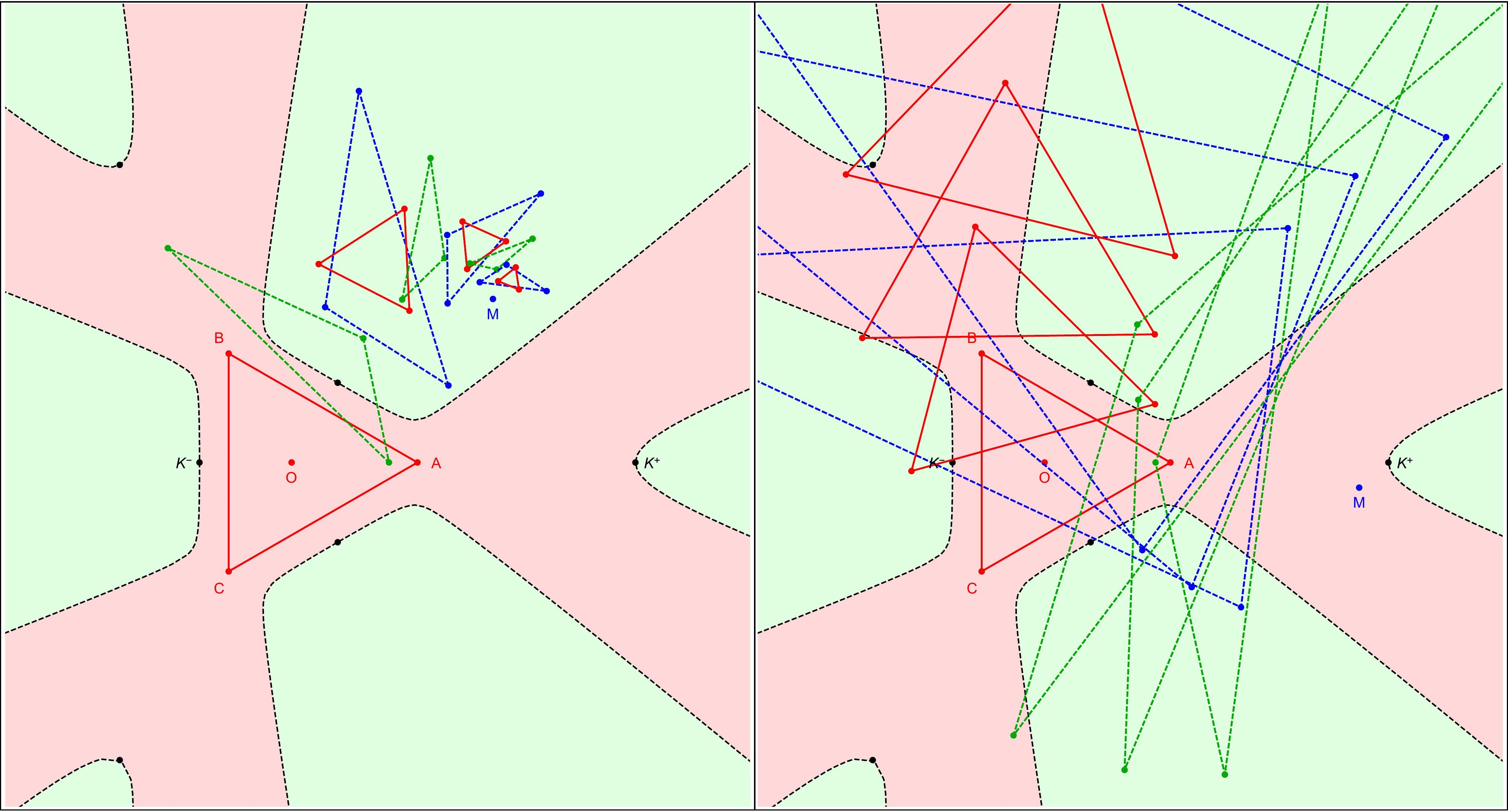}
    \caption{\textbf{Left:} Nine iterations of the circumcenter map 
    starting from an equilateral triangle (solid red) centered at $O$. $M$ is placed in an area-shrinkage region (green). A new, smaller equilateral (red) is produced every 3 applications of the map. \textbf{Right:} with $M$ on the area-expansion region (green), the area expands upon every 3 applications of the map.}
    \label{fig:equi-its}
\end{figure}

\begin{figure}
    \centering
    \includegraphics[trim=0 0 583 0,clip,width=.6\textwidth]{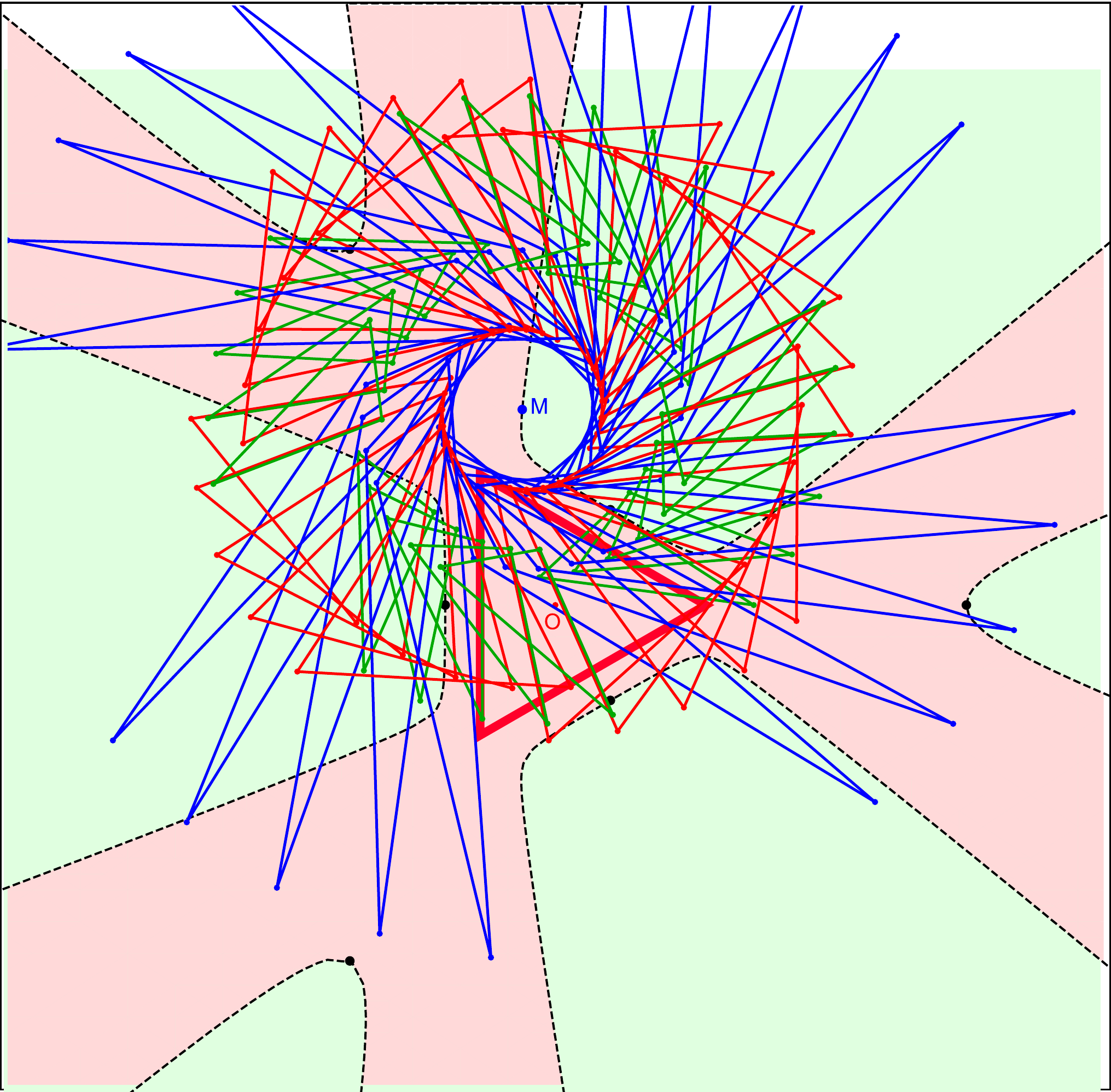}
    \caption{Starting from an equilateral (thick red) with centroid $O$, an $M$ is selected on the boundary between area-expansion (light red) and area-contraction (light green) regions. Sequential applications of the circumcenter map are shown in green, blue, and back to red. Since $M$ is on the boundary, every three applications of the map are area-preserving and result in a constant net rotation about $M$.}
    \label{fig:n3-neutral}
\end{figure}

As shown in \cref{fig:n3-isosceles}, as the starting triangle is affinely distorted, the number of connected components in the $s=1$ locus changes, revealing a non-trivial relationship (not studied here).

\begin{figure}
    \centering
    \includegraphics[width=\textwidth]{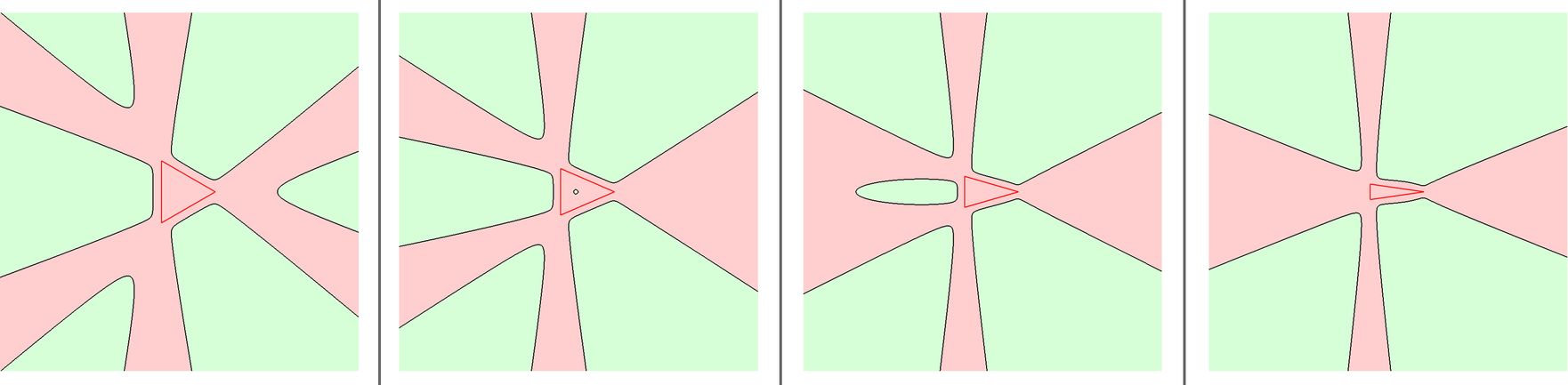}
    \caption{As the starting equilateral (red, left) is stretched horizontally, the $s=1$ boundary (under 3 applications of the circumcenter map) changes topology. See \href{https://bit.ly/3Pl2tmz}{\texttt{bit.ly/3Pl2tmz}} for interactive examples.}
    \label{fig:n3-isosceles}
\end{figure}

Referring to \cref{fig:canonical}:


\begin{corollary}
There are six points on \cref{prop:equi6} such that $\alpha=0$. These are: $K_1=(1+\sqrt{3},0)$, $K_2=(1-\sqrt{3},0)$, and their rotations by $\pm2\pi/3$.\end{corollary}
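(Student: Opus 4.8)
The plan is to intersect the two loci that are already available in explicit form: the degree-six $s=1$ curve of \cref{prop:equi6} and the rotation-neutral locus of \cref{prop:alpha0-3}. First I would use the factorization of the latter, $y_m(y_m-\sqrt{3}\,x_m)(y_m+\sqrt{3}\,x_m)=0$, which exhibits the $\alpha=0$ locus as the union of the three lines through the centroid with slopes $0$, $\sqrt{3}$, $-\sqrt{3}$. These are exactly the three axes of symmetry of $\R$, each passing through one vertex ($A=(1,0)$, $B$, $C$) and the midpoint of the opposite edge. Hence every common solution of $s=1$ and $\alpha=0$ lies on one of these lines, and the task reduces to intersecting the sextic with three lines.

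Next I would invoke the symmetry of the configuration. Rotation by $2\pi/3$ about the centroid permutes the vertices of $\R$ and therefore fixes $\R$; since the circumcenter is a triangle center, it is equivariant under this rotation (the same equivariance used to prove $\T=\T'$), so both the scaling factor $s$ and the rotation angle $\alpha$ are $C_3$-invariant functions of $M$. Consequently the intersection set is $C_3$-invariant, and it suffices to find the solutions on a single line, say $y_m=0$; the remaining points are then produced by rotating through $\pm 2\pi/3$, which also explains why the six points split into the two orbits of $K_1$ and $K_2$.

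Restricting to $y_m=0$, every term of the polynomial in \cref{prop:equi6} that carries a factor of $y_m$ vanishes, leaving a univariate sextic in $x_m$ which I expect to factor as
\[ 3\,x_m^{2}\,(x_m-1)^{2}\,(x_m^{2}-2x_m-2)=0. \]
The double roots $x_m=0$ and $x_m=1$ are degenerate: the first is the centroid (where the map is $2$-periodic) and the second is the vertex $A$ (where one subtriangle collapses), so both are discarded. The surviving quadratic factor has roots $x_m=1\pm\sqrt{3}$, giving precisely $K_1=(1+\sqrt{3},0)$ and $K_2=(1-\sqrt{3},0)$; rotating by $\pm 2\pi/3$ supplies the other four, for six points in total.

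The algebra is short once the reduction is set up, so the step I would be most careful about is the bookkeeping of which roots are geometrically meaningful. The restricted sextic has total multiplicity six on each line, matching B\'ezout, but the double roots at the centroid and the vertex must be identified as degenerate and excluded, and one must confirm that the centroid—which lies on all three lines at once—is not counted three times. The factorization itself is routine and can be verified either by the CAS or by reducing the quartic modulo $x_m^{2}-2x_m-2$, the minimal polynomial of $1\pm\sqrt{3}$.
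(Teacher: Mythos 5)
Your strategy---intersect the sextic of \cref{prop:equi6} with the three lines of \cref{prop:alpha0-3}, use the $C_3$-equivariance of the circumcenter map to reduce to the single line $y_m=0$, and factor the restricted polynomial---is the natural one, and is essentially what the paper's omitted CAS computation does. Your algebra also checks out: setting $y_m=0$ in \cref{prop:equi6} gives $3x_m^6-12x_m^5+9x_m^4+6x_m^3-6x_m^2=3x_m^2(x_m-1)^2(x_m^2-2x_m-2)$, whose simple roots are $1\pm\sqrt{3}$, i.e.\ $K_1$ and $K_2$, and the orbit argument correctly produces the remaining four points.

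The gap is in how you use \cref{prop:alpha0-3}, and correspondingly in your disposal of the double root $x_m=0$. You read that proposition as ``points satisfying $y_m(y_m^2-3x_m^2)=0$ have $\alpha=0$''; but the centroid satisfies this equation \emph{and} lies on the sextic (every term vanishes at the origin), yet it is not among the six points---so the two readings you rely on are inconsistent with each other. Calling the centroid ``degenerate'' is not accurate: the map is perfectly well defined there, and indeed $s=1$ there, which is exactly why the origin sits on the sextic. The actual reason it must be excluded is the 2-periodicity you mention: $\C_O^2(\R)=\R$ forces $\C_O^3(\R)=\C_O(\R)$, which is the point reflection of $\R$ in $O$, so $\alpha=\pi$ at the centroid, not $0$. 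This shows that the equation of \cref{prop:alpha0-3} is only a \emph{necessary} condition for $\alpha=0$ (it is the vanishing of $\sin\alpha$), so after intersecting you must classify every candidate point: the three vertices are discarded because the map is undefined there (when $M$ coincides with a vertex, two of the subtriangles collapse, not one); the centroid is discarded because $\alpha=\pi$; and for $K_1$, $K_2$ and their rotations you still owe a positive verification that $\alpha=0$ rather than $\pi$---supplied, for instance, by the paper's explicit propositions showing that the map is 3-periodic at these points, $\C_{K_i}^3(\R)=\R$, as illustrated in \cref{fig:canonical}. With that sorting made explicit, your count of six stands.
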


Let $\R_i=A_i B_i C_i$ denote the image of $\R$ under $i$ iterations of the circumcenter map. As shown above $\R_3$ is similar to $\R$. Referring to \cref{fig:canonical}(left):

\begin{proposition}
If $M=K_1$, then the vertices of $\R_1$ and $\R_2$ have coordinate 
\begin{align*}
A_1=&(1,0),\;\;\; \hskip 1.55cm B_1=\left(1+ \frac{ \sqrt{3}}{2},\frac{3}{2}+\sqrt{3}\right),\;\;\;C_1=\left(1+ \frac{ \sqrt{3}}{2},-\frac{3}{2}-\sqrt{3}\right)\\
A_2=&\left(-2-\sqrt{3}, 0\right),\;\;\;\; B_2=\left(1+\frac{\sqrt{3}}{2}, \frac{3}{2}\right),\hskip 1.3cm C_2= \left(1+\frac{\sqrt{3}}{2}, -\frac{3}{2}\right).
\end{align*}
$\R_1$ (resp. $\R_2$) has internal angles $30^\circ, 75^\circ, 75^\circ$ (resp. $150^\circ, 15^\circ, 15^\circ)$.
\end{proposition}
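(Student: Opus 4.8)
The statement is an explicit finite computation, so the plan is to evaluate the circumcenter map twice in coordinates, exploiting the symmetry of the configuration. With $A=(1,0)$, $B=(-1/2,\sqrt3/2)$, $C=(-1/2,-\sqrt3/2)$ and $M=K_1=(1+\sqrt3,0)$, the four points $\{M,A,B,C\}$ are symmetric under reflection across the $x$-axis: both $M$ and $A$ lie on the axis, and $C$ is the mirror image of $B$. Since the circumcenter is a triangle center and hence equivariant under isometries, this reflection carries the output set of $\C_M$ to itself, fixing the vertex coming from the subtriangle $MBC$ and interchanging the two coming from $MAB$ and $MCA$. Thus every iterate $\R_i$ has one vertex on the $x$-axis and the other two forming a mirror pair, which halves the work: it suffices to locate the on-axis vertex and one off-axis vertex at each stage.

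For the first iteration I would compute each new vertex as the intersection of two perpendicular bisectors of the relevant subtriangle. The on-axis vertex (circumcenter of $MBC$) lies on the perpendicular bisector of $BC$, which is exactly the $x$-axis; imposing equidistance from $M$ and $B$ reduces to a single linear equation whose solution is $A_1=(1,0)$. For an off-axis vertex such as the circumcenter of $MAB$, the perpendicular bisector of the on-axis chord $MA$ is the vertical line $x=1+\sqrt3/2$, so only the $y$-coordinate is unknown and is fixed by one further equidistance condition; its mirror image supplies the third vertex. This yields $B_1$ and $C_1$. Running the identical procedure on $\R_1$ then produces $\R_2$, again with $A_2$ on the axis and $B_2,C_2$ a mirror pair.

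With the six coordinates in hand, the angle claim follows from the law of cosines (equivalently, from dot products of edge vectors). Each $\R_i$ is isosceles, so I would obtain the apex angle at the on-axis vertex from the dot product of the two equal edges and the base angles from $(180^\circ-\text{apex})/2$; the nested surds collapse to recognizable cosines ($\pm\sqrt3/2$ and $\cos 15^\circ$), giving the angle multisets $\{30^\circ,75^\circ,75^\circ\}$ and $\{150^\circ,15^\circ,15^\circ\}$. The obstacles here are bookkeeping rather than conceptual: one must (i) fix the vertex-to-subtriangle correspondence of the map consistently across both iterations, and (ii) simplify the radical expressions with care. In particular, I would double-check which iterate carries the obtuse ($150^\circ$) apex and which the acute ($30^\circ$) one, since the apex angle switches between the two stages and it is easy to transpose the two triangles.
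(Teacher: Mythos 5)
Your method is sound, and it is essentially the verification the paper itself performs but does not print: these coordinate propositions are obtained ``via CAS'' from the explicit map formulas (\cref{app:map-tri}), so a hand computation of the three circumcenters via perpendicular bisectors is the same brute-force check, just organized intelligently. Your structural observations are all correct: $\{M,A,B,C\}$ is mirror-symmetric about the $x$-axis; the circumcenter of $MBC$ stays on the axis while those of $MAB$ and $MCA$ form a mirror pair; the perpendicular bisector of the on-axis chord $MA$ is the vertical line $x=1+\sqrt{3}/2$; and the same structure persists at the second iteration because $A_1=(1,0)$ lies on the axis again. Carrying this out does reproduce exactly the six coordinates in the statement.

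However, your closing caveat (iii) is not just a hypothetical pitfall --- it is precisely where the printed statement itself goes wrong, so when you finish the computation you must trust your arithmetic over the proposition. The triangle with vertices $A_1=(1,0)$ and $B_1,C_1=\left(1+\tfrac{\sqrt{3}}{2},\pm\left(\tfrac{3}{2}+\sqrt{3}\right)\right)$ has apex cosine
\[
\cos\angle A_1=\frac{\tfrac34-\left(\tfrac32+\sqrt{3}\right)^2}{\tfrac34+\left(\tfrac32+\sqrt{3}\right)^2}=\frac{-\tfrac92-3\sqrt{3}}{6+3\sqrt{3}}=-\frac{\sqrt{3}}{2},
\]
so $\R_1$ is the $150^\circ,15^\circ,15^\circ$ triangle; the analogous computation at $A_2=(-2-\sqrt{3},0)$ gives $\cos\angle A_2=+\sqrt{3}/2$, so $\R_2$ has angles $30^\circ,75^\circ,75^\circ$. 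That is, the coordinates in the proposition are correct but its two angle lists are transposed (note that in the companion proposition for $M=K_2$ the attribution does match the stated coordinates). Your write-up should assign $150^\circ,15^\circ,15^\circ$ to $\R_1$ and $30^\circ,75^\circ,75^\circ$ to $\R_2$, and note the discrepancy with the statement as printed.
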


Referring to \cref{fig:canonical}(right):

\begin{proposition}
If $M=K_2$, then the vertices of $\R_1$ and $\R_2$ have coordinate 
\begin{align*}
A_1=&\left(1,0\right),\;\; \hskip 1cm B_1=\left(1- \frac{ \sqrt{3}}{2},-\frac{3}{2}+\sqrt{3}\right),\;\;\;C_1=\left(1- \frac{ \sqrt{3}}{2},\frac{3}{2}-\sqrt{3}\right)\\
A_2=&\left( \sqrt{3}-2, 0\right),\;\;\;B_2=\left(1-\frac{\sqrt{3}}{2}, \frac{3}{2}\right),\;\;\hskip 1.3cm C_2= \left(1-\frac{\sqrt{3}}{2}, -\frac{3}{2}\right).
\end{align*}

$\R_1$ (resp. $\R_2$) has internal angles $30^\circ, 75^\circ, 75^\circ$ (resp. $150^\circ, 15^\circ, 15^\circ $).
\end{proposition}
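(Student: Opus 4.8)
The plan is to verify the stated coordinates by direct circumcenter computation, leaning on the reflection symmetry of the configuration to keep the algebra tractable. Fix the equilateral as $A=(1,0)$, $B=(-1/2,\sqrt3/2)$, $C=(-1/2,-\sqrt3/2)$ and put $M=K_2=(1-\sqrt3,0)$. The whole picture is invariant under reflection across the $x$-axis, since this reflection fixes $A$ and $M$ and swaps $B\leftrightarrow C$. Hence $\R_1=\C_M(\R)$ inherits the symmetry: the circumcenter of the symmetric subtriangle $MBC$ lies on the axis, while the circumcenters of the reflected pair $MAB$, $MCA$ are mirror images of one another. This already forces the qualitative shape asserted for $\R_1$ (one on-axis vertex, two vertices reflected across the axis), so only the coordinates remain.

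For the on-axis vertex I would simply check that $A=(1,0)$ is equidistant from $M$, $B$, $C$; a one-line computation gives $|AM|=|AB|=|AC|=\sqrt3$, so this circumcenter is $A$ itself, matching $A_1=A=(1,0)$. For one off-axis vertex I would intersect two perpendicular bisectors of the relevant subtriangle (equivalently, apply the standard determinant formula for a circumcenter), obtaining the point of \R_1 in the upper half-plane, and then take its reflection for the lower one. Comparing against the asserted $A_1,B_1,C_1$ finishes the first iterate.

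Next I would iterate once more. Since $M$ remains on the axis and $\R_1$ is $x$-symmetric, $\R_2=\C_M(\R_1)$ is again symmetric, so the same reduction applies: compute the on-axis circumcenter $A_2$ and one off-axis circumcenter, reflect for the third vertex, and match with the stated $A_2,B_2,C_2$. For the internal angles I would use symmetry one last time: both $\R_1$ and $\R_2$ are isosceles with apex at the on-axis vertex, so only the apex angle needs computing, via the dot product of the two edges leaving that vertex, with the two equal base angles following from the angle sum. This yields $\cos(\angle A_1)=\sqrt3/2$, giving $30^\circ,75^\circ,75^\circ$, and $\cos(\angle A_2)=-\sqrt3/2$, giving $150^\circ,15^\circ,15^\circ$.

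The routine but delicate part, and the main obstacle, is twofold: tracking the nested $\sqrt3$ terms cleanly through the two circumcenter steps, and fixing the vertex-labeling convention so that the three circumcenters of $MAB$, $MBC$, $MCA$ are assigned to $A_1,B_1,C_1$ exactly as in the statement (a point where \cref{def:circ} and \cref{fig:map} must be read carefully, since a mismatched assignment would permute the named vertices). Both are readily confirmed with the CAS used elsewhere in the paper, and the reflection symmetry is precisely what makes the by-hand check feasible. A useful consistency check is that a third application should return to the original, $\C_M(\R_2)=\R_3=\R$, in agreement with the $3$-periodicity at $K_2$; and I note the entire argument is structurally identical to the $M=K_1$ case, differing only in the sign of $\sqrt3$ throughout.
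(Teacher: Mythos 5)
Your proposal is correct, and all of its numerical claims check out: with $A=(1,0)$, $B=(-1/2,\sqrt3/2)$, $C=(-1/2,-\sqrt3/2)$, $M=K_2=(1-\sqrt3,0)$, one indeed has $|AM|=|AB|=|AC|=\sqrt3$, so the circumcenter of $MBC$ is $A$ itself; the off-axis circumcenters come out to $\left(1-\tfrac{\sqrt3}{2},\pm\left(\sqrt3-\tfrac32\right)\right)$, the second iterate gives $(\sqrt3-2,0)$ and $\left(1-\tfrac{\sqrt3}{2},\pm\tfrac32\right)$, the apex cosines are $\sqrt3/2$ and $-\sqrt3/2$ as you state, and a third application of $\C_M$ does return the original equilateral. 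The comparison to make is this: the paper offers no written proof at all for this proposition — it is one of the claims stated as ``obtained via CAS,'' presumably by pushing coordinates through the explicit circumcenter formulas of \cref{app:map-tri}. Your route is genuinely different in character: by exploiting the reflection symmetry across the $x$-axis (which fixes $A$ and $M$ and swaps $B\leftrightarrow C$) and the happy accident that $A$ is equidistant from $M$, $B$, $C$, you reduce each iterate to a single perpendicular-bisector intersection plus a reflection, making the whole verification feasible by hand and explaining structurally \emph{why} $\R_1$ and $\R_2$ are isosceles with an on-axis apex. What the CAS approach buys is uniformity (the same mechanical pipeline proves all the propositions in the paper at once); what yours buys is transparency, independence from software, and the observation that the $K_1$ and $K_2$ cases are the same computation up to the sign of $\sqrt3$. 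Your caution about vertex labeling is also well placed: the proposition's assignment of $B_1$, $C_1$ agrees with neither \cref{def:circ} nor \cref{fig:map} literally (they are swapped relative to the figure's convention), so the statement should be read as identifying the vertex \emph{sets} and the resulting triangle shapes, which is exactly what your proof establishes.
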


\begin{figure}
    \centering
    \includegraphics[width=\textwidth]{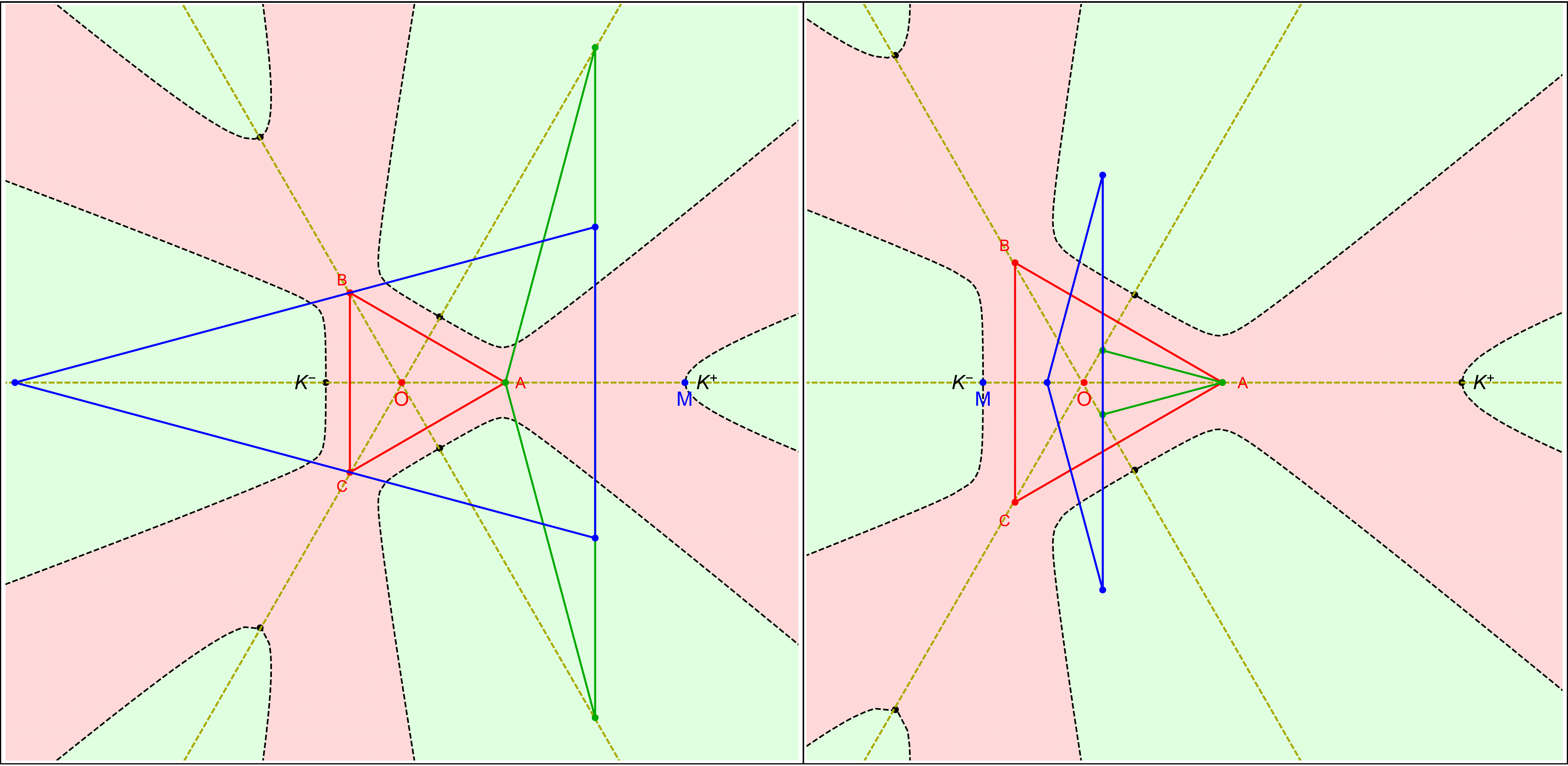}
    \caption{\textbf{Left:} starting with an equilateral $ABC$, when $M$ is at $K^+=(1+\sqrt{3},0)$, repeated applications of the circumcenter map will cycle indefinitely through the 3 canonical triangles shown (red, green, blue). \textbf{Right:} With $M=K^-=(1-\sqrt{3},0)$, the sequence is also 3-periodic, and the canonical triangles obtained are as shown.}
    \label{fig:canonical}
\end{figure}

Referring to \cref{fig:n3-david}, it can be shown that:

\begin{remark}
When $M$ is the centroid of $\R$, repeated applications of the circumcenter are 2-periodic, where the first triangle is $\R$ and the second one is a reflection of $\R$ about said centroid.
\end{remark}

\begin{figure}
    \centering
    \includegraphics[width=\textwidth]{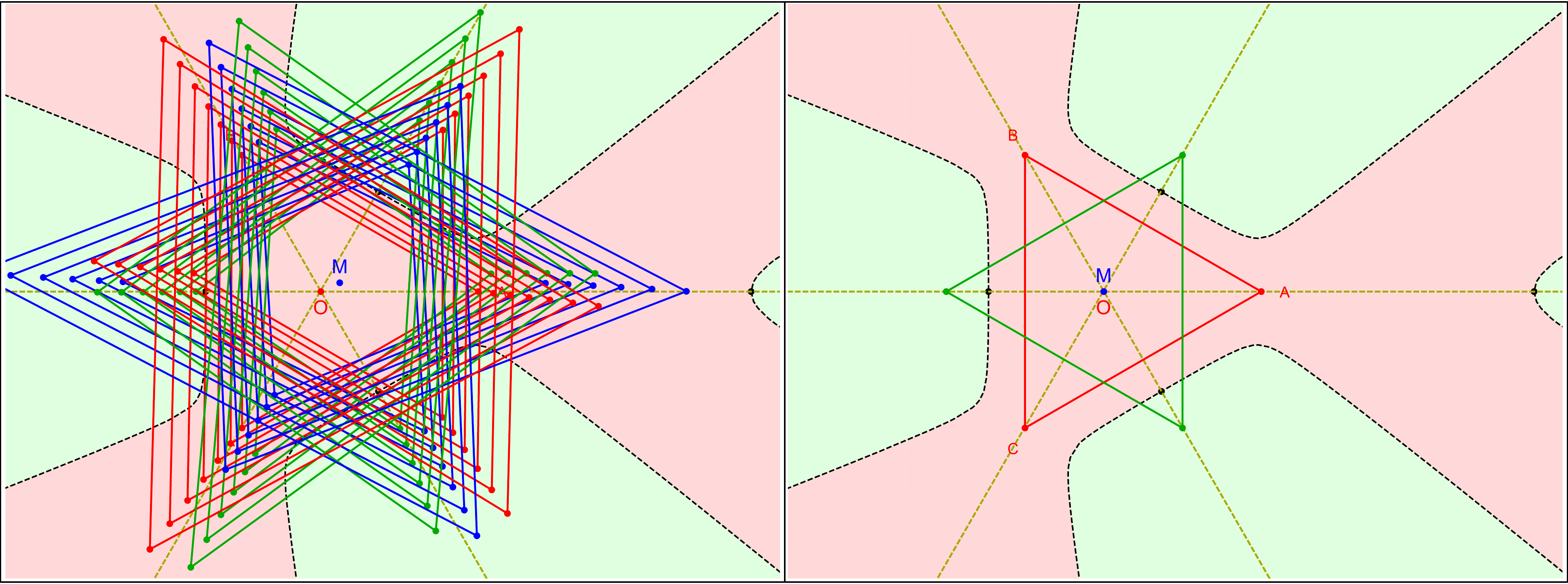}
    \caption{\textbf{Left:} 36 iterations of the circumcenter map with $M$ close to the centroid $O$ of the starting equilateral, resulting in a sequence which is slightly expanding. \textbf{Right:} If $M=O$, the sequence becomes 2-periodic, containing only the original $ABC$ (blue) and its reflection about $O$ (green).}
    \label{fig:n3-david}
\end{figure}

\subsection*{The case of a square}

Let $\Q=ABCD$ be a square with with vertices $A=(1,0)$,  $B=(0,1)$, $C=(-1,0)$, $D=(0,-1)$. Let $\Q'=\C_M^4(\Q)$, with $M=(x_m,y_m)$. Via CAS, we obtain the following propositions.  


\begin{proposition}
Starting from the square $\Q$, $s=1$ occurs when $M=(x_m,y_m)$ satisfies:
\begin{align*}
& \;\;\;\; 15 x_m^8 -68 x_m^6 y_m^2 + 90 x_m^4 y_m^4 - 68 x_m^2 y_m^6 + 15 y_m^8 - 64 x_m^6  
 + 64 x_m^4 y_m^2+\\ 
 & 64 x_m^2 y_m^4 - 64 y_m^6 + 98 x_m^4 + 52 x_m^2 y_m^2 + 98 y_m^4 
 - 64 x_m^2 - 64 y_m^2 + 15=0
\end{align*}
\end{proposition}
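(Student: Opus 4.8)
The plan is to coordinatize, iterate the map four times symbolically, and compute the area of $\Q'=\C_M^4(\Q)$ as an explicit function of $M=(x_m,y_m)$. By \cref{cor:similar}, $\Q'$ is similar to $\Q$, so its area equals $s\,\A(\Q)=2s$ (the square has area $2$); hence the locus $s=1$ is exactly the zero set of $\A(\Q')-2$. The only primitive ingredient needed is a formula for one circumcenter: working in the plane (or, equivalently, the complex plane with $M=x_m+iy_m$), the circumcenter of $MP_iP_{i+1}$ is the standard ratio of two $3\times 3$ determinants in the coordinates of the three points. Substituting the four vertices of $\Q$, forming the four first-generation circumcenters, and repeating three more times produces $\Q'$ as a vector of rational functions of $(x_m,y_m)$; the shoelace formula then gives $\A(\Q')$ as a single rational function, and clearing denominators in $\A(\Q')=2$ yields a polynomial equation whose relevant factor should be the asserted octic.

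Before grinding through that iteration I would first cut the work down using the symmetry of the square. The circumcenter is equivariant under every isometry (including reflections), and the dihedral group $D_4$ fixes $\Q$ as a set; hence for $g\in D_4$ one has $\C_{gM}^4(\Q)=\C_{gM}^4(g\Q)=g\,\C_M^4(\Q)$, so $s(gM)=s(M)$. Thus $s$, and therefore the locus polynomial, is a $D_4$-invariant function of $M$, i.e. a polynomial in the two basic invariants $p=x_m^2+y_m^2$ and $q=x_m^2y_m^2$. This already predicts the shape of the answer: rewriting the claimed octic in these variables collapses it to
\[
15p^4-64p^3+98p^2-64p+15\;-\;\bigl(128p^2-256p+144\bigr)q\;+\;256\,q^2,
\]
which is only degree $4$ in $p$ and degree $2$ in $q$. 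So it suffices to determine a bivariate polynomial of this bounded bidegree, and I can reconstruct it from its restriction to a few convenient one-parameter families rather than from one monstrous generic expression.

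Concretely, I would evaluate $s$ along three families and interpolate. With $M=(x_m,0)$ on a diagonal of the square one has $q=0$ and, crucially, the whole iteration is symmetric about the $x$-axis (reflection fixes $M$ and permutes $A,B,C,D$), which roughly halves the number of independent circumcenters; this recovers the $q=0$ slice $15p^4-64p^3+98p^2-64p+15$. With $M=(x_m,x_m)$ on the axis $y=x$ the iteration is symmetric about that line and $q=p^2/4$, giving a second constraint. One further off-axis sample then pins down the remaining $q$ and $q^2$ terms, since the bidegree is already known; a numerical check at a random $M$ against the directly iterated $\A(\Q')$ confirms that no genuine factor was discarded when clearing denominators.

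The main obstacle is the algebraic bulk of the four-fold symbolic iteration: each circumcenter is a ratio whose denominator is (twice) the signed area of a subtriangle, so composing four generations produces rational functions of rapidly growing degree, and the essential difficulty is certifying the cancellation that reduces $\A(\Q')-2$ to the clean octic while stripping the spurious factors coming from vanishing denominators (degenerate subtriangles). The symmetry reduction to $(p,q)$ and the axis/diagonal restrictions are precisely what tame this, since on each special family the extra mirror symmetry forces many coordinates to coincide or vanish, making the simplification one must verify by hand far smaller than the generic one. As consistency checks I would compare against the equilateral template of \cref{prop:equi6} and against the closed-form triangle ratio, whose structure (a product of side lengths and $M$-distances over subtriangle areas) suggests the analogous product formula for $s$ one expects to recover here.
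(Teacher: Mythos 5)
Your proposal is correct and matches the paper's own (omitted) proof, which is exactly the brute-force route you describe: iterate the explicit circumcenter formulas of \cref{app:map-tri} four times with a CAS, compute the area of $\C_M^4(\Q)$, and equate it to $\A(\Q)=2$. Your additional $D_4$-invariance reduction (writing the locus as a polynomial in $p=x_m^2+y_m^2$ and $q=x_m^2y_m^2$ and interpolating from symmetric slices) is sound --- the stated octic does collapse to the $(p,q)$ form you give --- but it is an optimization of, not a departure from, the paper's computation.
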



\begin{proposition}\label{prop:alphazero}
Starting from the square $\Q$, $\alpha=0$ when $M=(x_m,y_m)$ satisfies:
\[x_my_m(x_m^2- y_m^2)=0\]
\end{proposition}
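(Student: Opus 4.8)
The plan is to pass to complex coordinates, reduce rotation-neutrality to the vanishing of a single real-analytic function of $M$, and then read off its zero set using the square's dihedral symmetry.

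First I would place the centroid at the origin and write $m=x_m+iy_m$, with the vertices of $\Q$ at $1,i,-1,-i$. By \cref{cor:similar} the map $\C_M^4$ carries $\Q$ to a directly similar square, so it acts as $z\mapsto \rho z+b$ with $\rho=se^{i\alpha}$, where $\rho$ is the ratio of a directed edge of $\Q'=\C_M^4(\Q)$ to the corresponding edge of $\Q$. A regular $4$-gon shares all its symmetry axes with a rotated copy exactly when the rotation is a multiple of $\pi/4$, so rotation-neutrality ($\alpha\equiv 0\bmod\pi/4$) is equivalent to $\rho^4\in\mathbb{R}$, i.e. to $\mathrm{Im}(\rho^4)=0$. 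The crucial observation is that the target polynomial is itself such an imaginary part: since $\mathrm{Im}(m^4)=4\,x_my_m(x_m^2-y_m^2)$, the claimed locus is exactly $\{m^4\in\mathbb{R}\}$, the four lines through the centroid at angles $k\pi/4$, which are precisely the symmetry axes of $\Q$. (This mirrors the triangle case, where the locus of \cref{prop:alpha0-3} is $\mathrm{Im}(m^3)=0$.)

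Next I would establish the inclusion ``axes $\subseteq$ locus'' by symmetry. Because circumcenters are triangle centers, $\C_M$ is equivariant under every isometry, so it commutes with any reflection $\sigma$ that fixes $M$. If $M$ lies on a symmetry axis $\ell$ of $\Q$ and $\sigma$ is the reflection in $\ell$, then $\sigma$ fixes $M$ and sends $\Q$ to itself with reversed vertex order; since reversing the vertex order commutes with $\C_M$, applying $\sigma\circ\C_M^4=\C_M^4\circ\sigma$ to $\Q$ gives $\sigma(\Q')=\Q'$ as a set. Thus $\ell$ is a symmetry axis of the square $\Q'$, its orientation relative to $\ell$ is a multiple of $\pi/4$, and $\mathrm{Im}(\rho^4)=0$. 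Hence every point of the four lines $x_my_m(x_m^2-y_m^2)=0$ is rotation-neutral.

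For exactness I would compute $\rho$ explicitly with a CAS: one application of $\C_M$ replaces a consecutive vertex pair by the circumcenter given by the standard ratio-of-determinants formula in $z_j,\bar z_j$, an explicit rational map in $(m,\bar m)$; composing four times and forming an edge ratio yields $\rho$, and then $\mathrm{Im}(\rho^4)=s^4\sin 4\alpha$ as a rational function of $(x_m,y_m)$. The goal is to present its numerator as $x_my_m(x_m^2-y_m^2)$ times a factor with no real zeros, which forces the zero set to be exactly the four lines; here rotational equivariance helps, since rotation by $\pi/2$ merely permutes the vertices of $\Q$ and so $\sin 4\alpha=0$ is invariant under $m\mapsto im$, which together with the inclusion above already singles out the symmetry axes. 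The step I expect to be the main obstacle is precisely this verification of exactness, which is purely a matter of symbolic size: each circumcenter carries a denominator (a directed-area term $\mathrm{Im}(\bar z_jz_k)$), so after four compositions $\rho$ is a bulky rational function and extracting the clean factorization of $\mathrm{Im}(\rho^4)$ is where the effort concentrates. Keeping everything in $m,\bar m$ until the very end and dividing out the four predicted linear factors is the practical route, with the symmetry reductions guaranteeing that the remaining factor is real-nonvanishing and hence that the locus is exactly $x_my_m(x_m^2-y_m^2)=0$.
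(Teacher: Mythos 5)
Your reduction hinges on reading ``$\alpha=0$'' as ``$\alpha\equiv 0\pmod{\pi/4}$'', i.e.\ as $\mathrm{Im}(\rho^4)=0$, and that is where the argument breaks. The rotation group of a square is generated by $\pi/2$, not $\pi/4$: a copy rotated by $\pi/4$ is visibly rotated (upright square versus ``diamond''), so rotation-neutrality for $\Q$ means $\alpha\equiv 0\pmod{\pi/2}$, equivalently $\rho^4\in\mathbb{R}_{>0}$, not merely $\rho^4\in\mathbb{R}$. (That the proposition must indeed be read modulo the square's symmetry, rather than as the labelled angle being exactly $0$, is forced by the centre: for $M=O$ one checks that $\C_O$ acts on the labelled vertices $1,i,-1,-i$ as multiplication by $(1+i)/2$, so $\rho=((1+i)/2)^4=-1/4$ and the labelled rotation is $\pi$.) Because your criterion also admits the $45^\circ$ case $\rho^4<0$, two concrete problems arise. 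First, your reflection argument only shows that for $M$ on an axis the image square's axis directions agree with the original's up to a multiple of $\pi/4$; it does not decide between $\rho^4>0$ and $\rho^4<0$, so the inclusion ``axes $\subseteq$ zero-rotation locus'' is not established. Second, your CAS ``exactness'' step is aimed at showing the zero set of $\mathrm{Im}(\rho^4)$ is exactly the four lines; even if that factorization succeeds, it identifies the four lines with $\{\alpha\equiv 0\bmod \pi/4\}$, from which the proposition still does not follow until the sign of $\mathrm{Re}(\rho^4)$ on the lines is pinned down --- and if $45^\circ$-rotation points exist elsewhere in the plane, the clean factorization you are counting on simply fails and the approach needs rework.

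The gap is fixable inside your own framework. For $M\ne O$ on a symmetry axis $\ell$, your equivariance argument in fact shows every iterate is symmetric about $\ell$; hence the centre of $\Q'$, which is $T(O)=m(1-\rho)$ for the similarity $T(z)=\rho(z-m)+m$, lies on $\ell$, and since $m\in\ell\setminus\{0\}$ this forces $\rho\in\mathbb{R}$, i.e.\ $\alpha\equiv 0\pmod{\pi}$ --- genuinely rotation-neutral; the case $M=O$ is the explicit computation above. Alternatively, since $\rho^4$ is real and nonvanishing on each connected component of the four lines minus the finitely many singular points (vertices and sideline crossings), it suffices to check $\mathrm{Re}(\rho^4)>0$ at one CAS sample point per component, and dihedral symmetry reduces this to a handful of points. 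For comparison, the paper gives no conceptual proof at all: the proposition is one of several stated as direct CAS verifications. So your symmetry scaffolding is a genuine addition, but as written it proves a weaker statement than the one claimed.
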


\section{Regular polygons, $n \geq 3$}
\label{sec:all-n}
In this section we assume $\P$ is a regular $n$-gon, $n{\geq}3$. Without loss of generality, let the centroid $O=(0,0)$ and the first vertex $P_1=(1,0)$. \cref{fig:regions3456} illustrates the partitioning of the plane into area-contracting and area-expanding regions by $n$ applications of the map, for $n=3,4,5,6$.

\begin{proposition}
As $M$ approaches a sideline of $\P$, $s$ approaches infinity.
\label{prop:inf}
\end{proposition}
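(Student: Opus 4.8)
The plan is to trace a single divergence through the iteration. Put $M$ at the origin and write $\P^{(k)}=\C_M^k(\P)$ with vertices $P_i^{(k)}$ and radial distances $\rho_i^{(k)}=|MP_i^{(k)}|$. By \cref{def:circ} the vertex $P_i^{(k+1)}$ is the circumcenter of $MP_i^{(k)}P_{i+1}^{(k)}$; since $M$ itself lies on this circumcircle, $\rho_i^{(k+1)}$ is exactly its circumradius $R_i^{(k)}$. I will (a) show that one radial distance of the first iterate blows up as $M$ nears a sideline, (b) show that such a blow-up can only grow under further iteration, and (c) convert a divergent radial distance at step $n$ into $s\to\infty$ using that the net map is a scaling about $M$.

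For (a), as $M$ approaches an interior point of the line through $P_1P_2$, the triangle $MP_1P_2$ degenerates, so $\A(MP_1P_2)\to 0$ while $|MP_1|,|MP_2|,|P_1P_2|$ remain bounded away from zero; hence \[\rho_1^{(1)}=R_1^{(0)}=\frac{|MP_1|\,|MP_2|\,|P_1P_2|}{4\,\A(MP_1P_2)}\longrightarrow\infty.\]

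The heart of the argument is step (b). Because $M$, $P_i^{(k)}$ and $P_{i+1}^{(k)}$ all lie on the circle of radius $\rho_i^{(k+1)}$ centered at $P_i^{(k+1)}$, each chord $|MP_i^{(k)}|=\rho_i^{(k)}$ and $|MP_{i+1}^{(k)}|=\rho_{i+1}^{(k)}$ is at most the diameter $2\rho_i^{(k+1)}$. This gives the crude but robust bound $\rho_i^{(k+1)}\ge\tfrac12\max(\rho_i^{(k)},\rho_{i+1}^{(k)})$, so the maximal radius $m_k=\max_i\rho_i^{(k)}$ satisfies $m_{k+1}\ge\tfrac12 m_k$. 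Iterating, $m_n\ge 2^{-(n-1)}m_1\ge 2^{-(n-1)}\rho_1^{(1)}\to\infty$. For (c), by \cref{cor:similar} the composite $\C_M^n$ is a rotation-and-scaling about the self-homologous point $M$ with ratio $s$, so $\rho_i^{(n)}=s\,\rho_i^{(0)}$ for every $i$ and thus $m_n=s\,\max_i\rho_i^{(0)}$. Since $\max_i\rho_i^{(0)}=\max_i|MP_i|$ stays bounded as $M\to$ sideline, $s=m_n/\max_i\rho_i^{(0)}\to\infty$.

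The main obstacle is conceptual rather than computational: Stewart's theorem only guarantees that $\C_M^k(\P)$ is similar to $\P$ when $n\mid k$, and the intermediate polygons can be ``as diverse in shape as imaginable'', so the lone vertex that escapes to infinity in $\C_M(\P)$ does not automatically force the final scaling to explode — a divergence could in principle be absorbed by the uncontrolled intermediate shapes. The chord--diameter inequality is precisely what rules this out: it bounds each new radial distance below by half of an old one without any reference to the intermediate angles or side directions, letting me propagate the single blow-up through all $n-1$ intermediate polygons while tracking only the scalar $m_k$. The one case needing a separate word is $M$ tending to a \emph{vertex} of $\P$ (where $|MP_i|\to 0$); there I would instead apply the blow-up argument (a) to an adjacent subtriangle whose area still vanishes but whose vertex distances stay positive.
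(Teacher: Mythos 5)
Your main argument is correct, and it is a rigorous, quantitative rendering of the same basic mechanism the paper uses, but built on a different key lemma. The paper's proof (contributed by a referee) places $M$ \emph{exactly on} the sideline and argues projectively: the perpendicular bisectors of $MP_i$ and $MP_{i+1}$ are then parallel, so $P_i'$ is a point at infinity, and a vertex at infinity ``remains at infinity'' under further applications of $\C_M$; that persistence step is asserted rather than proved, and the limiting statement of \cref{prop:inf} is left to an implicit continuity argument. You instead keep $M$ off the sideline so that every quantity is finite: the circumradius formula $R=|MP_1|\,|MP_2|\,|P_1P_2|/(4\,\A(MP_1P_2))$ produces the initial blow-up, and your chord--diameter inequality $\rho_i^{(k+1)}\ge\frac12\max\bigl(\rho_i^{(k)},\rho_{i+1}^{(k)}\bigr)$ is a clean quantitative substitute for ``infinity stays at infinity'': it propagates the blow-up through all intermediate polygons at the cost of a harmless factor $2^{-(n-1)}$, with no control needed on their (arbitrarily diverse) shapes, after which \cref{cor:similar} converts the divergent radius into $s\to\infty$. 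What your route buys is a genuine proof of the limit statement as stated; what the paper's buys is brevity and a geometric identification of \emph{which} vertex degenerates.

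One caveat: your closing remark about $M$ tending to a \emph{vertex} of $\P$ does not work as stated. If $M\to P_1$, the only subtriangles with vanishing area are $MP_nP_1$ and $MP_1P_2$, and both contain the side $MP_1$, whose length tends to zero, so there is no adjacent subtriangle of the kind you describe. Indeed, by the law of sines the circumradius of $MP_1P_2$ equals $|MP_2|/(2\sin\angle MP_1P_2)$, which stays bounded when $M$ approaches $P_1$ transversally to the sidelines; no vertex of $\C_M(\P)$ escapes at the first step. The case can still be rescued by a different observation: the circumcircles centered at $P_n'$ and $P_1'$ share the chord $MP_1$, so $M$ lies at distance $|MP_1|/2$ from the sideline $P_n'P_1'$ of $\C_M(\P)$, and your steps (a)--(b) then apply to the iterates of $\C_M(\P)$, with the blow-up occurring one generation later. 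Since the paper's own proof also silently excludes vertices, this is peripheral to the proposition, but the repair you sketch is not the right one.
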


The proof below was kindly contributed by a referee.

\begin{proof}
If $M$ is on a sideline of $\P$, say $P_i  P_{i+1}$, then $P_i'$ is at infinity (since $P_i'$ is the intersection of the bisectors $M P_i$ and $M P_{i+1}$), but the rest of the vertices $P_j'$ are finite. If a polygon $\P$ has a vertex at infinity, it will remain at infinity under the map $\C_M$.
 \end{proof}
\begin{figure}
    \centering
    \includegraphics[width=\textwidth]{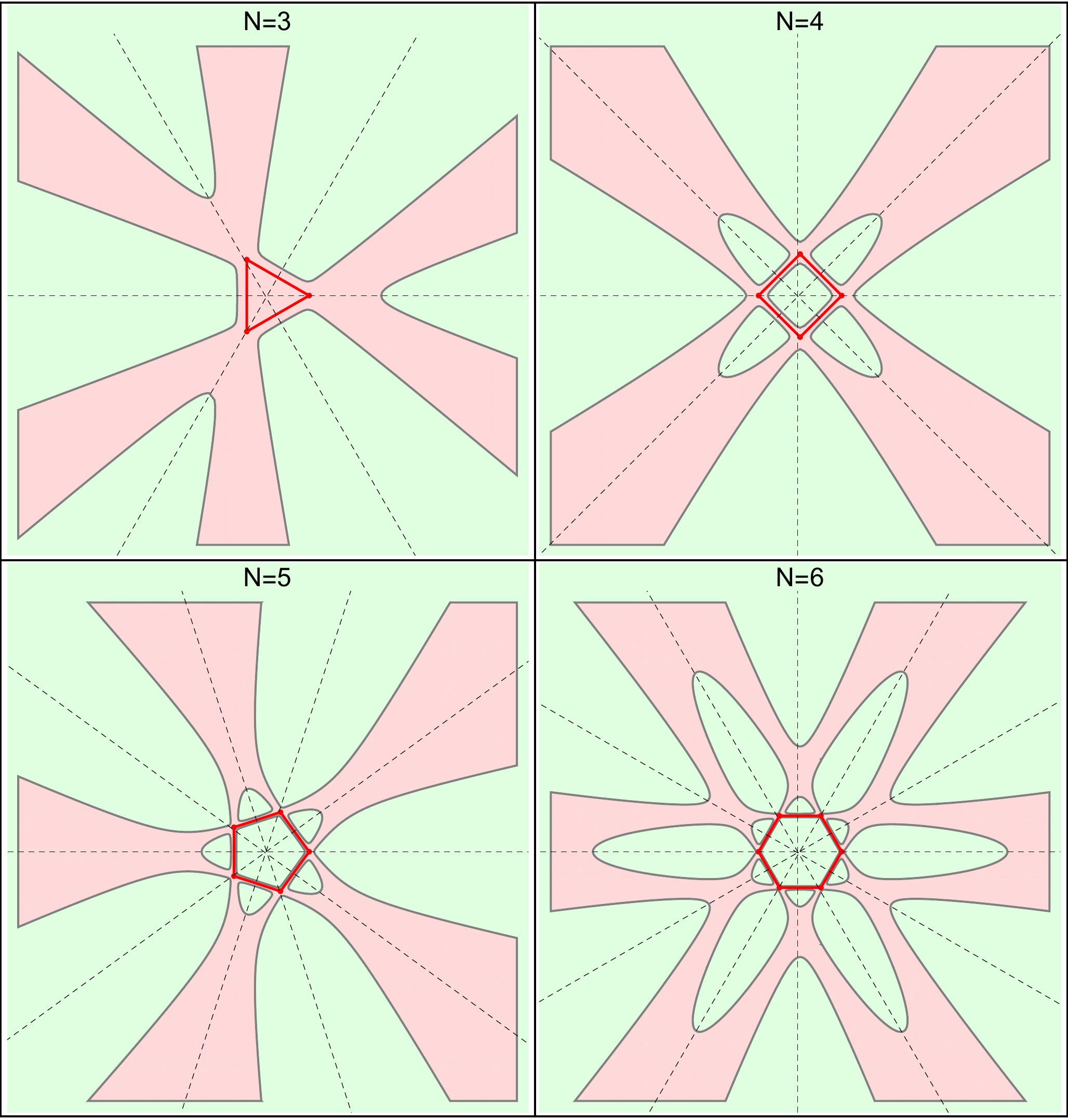}
    \caption{Area contraction (green) and expansion (red) regions for the circumcenter map applied to a regular triangle (top left), square (top right), pentagon (bottom left) and hexagon (bottom right). Notice that in all but in the $n=3$, an area contracting region exists interior to the original polygon. Also shown are the zero-rotation lines (dashed black).}
    \label{fig:regions3456}
\end{figure}

Let $\P'=\C_M^n(\P)$. Let $\alpha$ be the angle of rotation of the similarity that takes $\P$ to $\P'$.

\begin{conjecture}
The locus of $M$ such that $\alpha=0$ is the union of $n$ lines along directions $k \pi/n$, $k=0,\ldots,n-1$.
\end{conjecture}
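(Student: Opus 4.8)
The plan is to exploit the full dihedral symmetry $D_n$ of the regular $n$-gon $\P$ together with the equivariance of the circumcenter map. First I would record the key equivariance: because a circumcenter is the intersection of perpendicular bisectors, \emph{any} isometry $g$ (orientation-preserving or not) satisfies $\C_{g(M)}(g(\P)) = g(\C_M(\P))$, and hence $\C_{g(M)}^n(g(\P)) = g(\C_M^n(\P))$. Writing $\C_M^n(\P) = \T_M(\P)$ with $\T_M$ the self-homologous spiral similarity about $M$ (rotation angle $\alpha(M)$, scale $s(M)$), and using that both cyclic shifts and order-reversals of $\P$ leave $\T_M$ unchanged, I obtain for every symmetry $g \in D_n$ of $\P$ (which relabels $\P$ by a shift and/or a reversal)
\[
\T_{g(M)} = g\,\T_M\,g^{-1}.
\]
Reading off the rotation part, conjugation by an orientation-preserving $g$ (a rotation about $O$ by a multiple of $2\pi/n$) preserves the angle, while conjugation by an orientation-reversing $g$ (a reflection in $D_n$) negates it. Thus $\alpha(\rho(M)) = \alpha(M)$ and $\alpha(\tau(M)) = -\alpha(M)$, where $\rho$ is rotation by $2\pi/n$ and $\tau$ is a reflection of $\P$.

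Second, the forward inclusion. The $n$ reflection axes of $D_n$ are exactly the $n$ lines through $O$ at directions $k\pi/n$, $k=0,\dots,n-1$. If $M$ lies on such an axis $L$ with reflection $\tau_L$, then $\tau_L(M)=M$, so $\alpha(M) = \alpha(\tau_L(M)) = -\alpha(M)$, forcing $2\alpha(M)\equiv 0 \pmod{2\pi}$; that is, $\T_M$ is either a pure homothety ($\alpha=0$) or a homothety composed with the point reflection in $M$ ($\alpha=\pi$). That the value is $\alpha=0$ (and not $\pi$) can be checked at one regular point of each axis and propagated by the local constancy of $\alpha\in\{0,\pi\}$ along the axis, a point already confirmed by the explicit positions $K_1,K_2$ on the $x$-axis for $n=3$. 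This establishes that each of the $n$ lines is contained in the $\alpha=0$ locus.

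The reverse inclusion is the real obstacle, and is the reason the statement is posed as a conjecture. Passing to the complex coordinate $z = x_m + i y_m$, the explicit $n=3$ and $n=4$ results (\cref{prop:alpha0-3,prop:alphazero}) are, up to a positive factor, $\mathrm{Im}(z^3)=0$ and $\mathrm{Im}(z^4)=0$, i.e.\ $r^n\sin(n\theta)=0$ with $\theta=\arg z$, whose zero set is precisely the $n$ lines $\theta=k\pi/n$. This strongly suggests that the numerator of $\sin\alpha(M)$ is, up to a positive rational factor, a constant multiple of $\mathrm{Im}(z^n)$: a polynomial of degree exactly $n$ equal to the product of the $n$ linear forms cutting out the axes. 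The symmetry argument already shows this numerator vanishes on all $n$ axes, so the corresponding $n$ linear forms divide it; the missing step is to prove there are no further components, i.e.\ that the numerator has total degree exactly $n$ and is square-free.

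To attack the obstacle I would derive a closed-form complex expression for $\T_M$ from the pedal/antipedal iteration underlying \cref{cor:similar}, writing $s(M)\,e^{i\alpha(M)}$ as the modulus-and-argument of an explicit rational function of $z$ and $\bar z$, and then show that its imaginary part is divisible by, and by a degree count equal to, a positive multiple of $\mathrm{Im}(z^n)$. Controlling this algebraic degree \emph{uniformly} in $n$ — rather than verifying it case by case via CAS, as is done for small $n$ — is the hard part, since the composition $\C_M^n$ produces expressions whose naive degree grows rapidly and whose massive cancellation down to degree $n$ is exactly what must be explained.
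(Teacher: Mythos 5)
First, a point of calibration: the statement you were asked to prove is posed in the paper as a \emph{conjecture} --- the authors offer only experimental evidence (the compactified region plots, \cref{tab:regions}, and the $n=3,4$ CAS computations of \cref{prop:alpha0-3,prop:alphazero}) and no proof at all. So there is no paper argument to match; your proposal must be judged on its own merits, and by that standard it is an honest partial attack, not a proof. Your equivariance identity $\T_{g(M)} = g\,\T_M\,g^{-1}$ for $g$ in the dihedral group of $\P$ is correct (circumcenters are equivariant under all isometries, and cyclic relabelings or reversals of the vertices do not change the self-homologous similarity), and it does rigorously place the $n$ reflection axes inside the locus $\{\alpha \equiv 0 \pmod{\pi}\}$. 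That is already more than the paper establishes.

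Two gaps remain, however --- one you flag, one you do not. The unflagged one is a genuine error: you claim that $\alpha=0$ (rather than $\pi$) propagates along each axis by local constancy. It does not, because $\alpha$ genuinely takes the value $\pi$ on parts of every axis. At the centroid $O$, which lies on all $n$ axes, $\C_O(\P)$ is a copy of $\P$ rotated by $\pi/n$ and scaled by $1/(2\cos(\pi/n))$, so $\C_O^n(\P)$ is rotated by exactly $\pi$; the paper's remark on $2$-periodicity at the centroid for $n=3$ is an instance of this. Hence $\alpha$ jumps between $0$ and $\pi$ along each axis (the jumps can occur only at degeneracies, e.g.\ where the axis crosses a sideline of $\P$ and $s\to\infty$, cf.\ \cref{prop:inf}), and what your symmetry argument actually proves is that the axes lie in the zero set of $\sin\alpha$ --- which is indeed what the polynomials of \cref{prop:alpha0-3,prop:alphazero} cut out, but is not literally ``$\alpha=0$.'' The second gap is the one you concede: the reverse inclusion, that the zero set of $\sin\alpha$ contains \emph{nothing beyond} these $n$ lines, is the real content of the conjecture, and your program for it (a closed-form complex expression for $\T_M$, then a degree count and square-freeness of the numerator of $\sin\alpha$) is a plan rather than an argument; nothing in the proposal controls the degree after the massive cancellations produced by composing $\C_M$ $n$ times. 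In summary: a correct and worthwhile one-sided result once the $0$-versus-$\pi$ issue is repaired, but the conjecture itself remains open under your proposal, exactly where the paper leaves it.
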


To facilitate region counting, in \cref{fig:n3to11} the plane is compactified into a single hemisphere, via stereographic projection. \cref{tab:regions} shows the counts of area-contracting regions. This suggests:

\begin{table}
\begin{tabular}{|c|c|c|c|c|}
\hline
n & interior & non-compact & compact & total \\
\hline
3 & 0 & 2n & 0 & 2n \\
4 & 1 & n & n & 1+2n \\
5 & 1 & 2n & n & 1+3n \\
6 & 1 & n & 2n & 1+3n \\
7 & 1 & 2n & 2n & 1+4n \\
8 & 1 & n & 3n & 1+4n \\
9 & 1 & 2n & 3n & 1+5n \\
10 & 1 & n & 4n & 1+5n \\
11 & 1 & 2n & 4n & 1+6n\\
\hline
\end{tabular}
\caption{Region count according to \cref{fig:n3to11}}.
\label{tab:regions}
\end{table}

\begin{conjecture}
There is a single connected area-expanding region. Let $k$ denote the number of area-contracting connected regions. Then:
\[k=\begin{cases}
\;\mbox{odd}~n:\;\;r^*+n(n+1)/2\\
\mbox{even}~n:~1+n^2/2\\
\end{cases}
\]
where $r^*=0$ if $n=3$, and $1$ otherwise. 
\label{conj:k}
\end{conjecture}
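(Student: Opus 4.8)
The statement has two parts: (a) the area-expanding locus $\{s>1\}$ is connected, and (b) the number of area-contracting components $\{s<1\}$ equals the stated value. The plan is to treat $s$ as a function on the compactified plane $S^2$ (the hemisphere of the stereographic picture in \cref{fig:n3to11}), to show that the level set $\{s=1\}$ is a real algebraic curve carrying the dihedral symmetry $D_n$ of $\P$, and to count the components of its complement by combining a reduction to a fundamental sector with a global Euler-characteristic (Morse-theoretic) bookkeeping.

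First I would pin down $s(M)$ in closed form. For $n=3,4$ the defining polynomial of $\{s=1\}$ is already in hand (\cref{prop:equi6} and the square case), each of degree $2n$; I would generalize this to all regular $n$ by writing the area ratio $s^2=\A(\C_M^n\P)/\A(\P)$ as the telescoping product $\prod_{k=0}^{n-1}\A(\C_M^{k+1}\P)/\A(\C_M^{k}\P)$ and evaluating it through the circumradius recursion $|MP_i'|=m_im_{i+1}l_i/(4\A_i)$ together with the per-step angular update of each vertex as seen from $M$. The expected outcome is that $\{s=1\}$ is cut out by a $D_n$-invariant polynomial $F_n(x,y)$ of degree $2n$, from which all subsequent topology is read off. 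The $D_n$-symmetry then lets me restrict attention to one closed fundamental sector of opening angle $\pi/n$, using the zero-rotation lines (the $n$ lines through the centroid of the preceding conjecture) as the sector walls and as scaffolding that $\{s=1\}$ must respect.

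With $F_n$ in hand I would count components at the three structurally distinct loci and glue. (i) At infinity: the leading homogeneous form of $F_n$ determines the asymptotic directions along which $\{s=1\}$ escapes, and hence the number of non-compact complementary sectors. Here the parity split enters. The sidelines are the poles of $s$ (\cref{prop:inf}), and for even $n$ opposite sidelines are parallel, so there are only $n/2$ distinct sideline directions versus $n$ for odd $n$; this halves the number of angular sectors at the point at infinity, producing the $n$ (even) versus $2n$ (odd) non-compact contracting regions of \cref{tab:regions}. (ii) In the interior: for $n\ge4$ the centroid lies in $\{s<1\}$ and contributes the single interior region, whereas for $n=3$ the centroid lies on $\{s=1\}$ (it is a zero of $F_3$), which is exactly the source of the correction $r^*=0$. (iii) The remaining compact ovals are counted one sector at a time by tracking the sign of $s-1$ along a ray from the centroid to infinity and invoking the intermediate value theorem, then multiplying by the $2n$ sector-images; the arithmetic should collapse to $n(n+1)/2$ (odd) and $n^2/2$ (even). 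Global consistency is enforced by the Euler characteristic of $S^2$, i.e. $V-E+R=2$ for the arrangement formed by $\{s=1\}$ together with the connected sideline configuration, which cross-checks the total.

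For the connectivity claim (a) I would argue that $\{s>1\}$ contains a neighborhood of each of the $n$ sidelines (since $s\to\infty$ there), that the union of the $n$ sidelines is connected, and that there is no bounded expanding oval, the last point following from the sign analysis along rays showing that $s-1$ changes sign a controlled, even number of times with no isolated expanding pocket. The hard part, and the reason this remains a conjecture, is the rigorous topological control of a real algebraic curve of degree $2n$ uniformly in $n$: ruling out spurious small ovals, fixing the nesting and merging pattern of the ovals within each sector, and verifying how the curve actually closes up through the line at infinity. This is a tight Harnack/Morse-index accounting rather than a mere bound, and carrying it out for general $n$, as opposed to the CAS verification available for small $n$, is the genuine obstacle.
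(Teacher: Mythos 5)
There is nothing in the paper to compare your proposal against: \cref{conj:k} is stated as a \emph{conjecture}, supported only by the region counts of \cref{tab:regions} read off from the compactified pictures in \cref{fig:n3to11}. The paper offers no proof, and neither does your proposal --- as you yourself concede in your closing paragraph. What you have written is a plausible research program, not an argument: the central object it relies on, a $D_n$-invariant polynomial $F_n$ of degree $2n$ cutting out $\{s=1\}$ for every $n$, is itself only extrapolated from the two CAS-computed cases $n=3,4$ (\cref{prop:equi6} and the square), and every step that would actually pin down the topology --- ruling out bounded expanding ovals, showing the compact contracting ovals number exactly $n(\lfloor n/2\rfloor-1)$, controlling how the curve closes up at infinity --- is deferred to an unperformed ``tight Harnack/Morse accounting.'' Harnack's bound for a degree-$2n$ curve allows on the order of $2n^2$ ovals, so it cannot by itself distinguish the conjectured count from nearby values; the exact count is precisely the open problem.

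That said, your scaffolding is sound and consistent with the paper's evidence, and parts of it are genuinely clarifying: the parity mechanism (opposite sidelines of an even-gon are parallel, so only $n/2$ pole directions reach infinity, halving the non-compact sector count from $2n$ to $n$) matches the non-compact column of \cref{tab:regions}; and your observation that the centroid satisfies the sextic of \cref{prop:equi6} when $n=3$ (consistent with the 2-periodicity remark, since then $s=1$ at the centroid) is a correct and concrete explanation for the correction term $r^*$. One caution: you propose to use the zero-rotation lines as sector walls, but those lines are themselves only conjectural in the paper; for a regular $n$-gon you should instead use its mirror lines, which give the same $D_n$ reduction unconditionally. If you want to advance the problem rather than restate it, the actionable first step is the one you glossed over: prove that $s(M)$ is a rational function of $M$ of controlled degree for all $n$ (e.g., via the circumradius recursion you cite), since without that even the algebraicity of $\{s=1\}$ is not established.
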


Experiments suggest:

\begin{conjecture}
Given an $n$, the number of area-contracting regions for a simple $n$-gon $\Q$ (no self-intersections), is
maximal if $\Q$ is regular.
\end{conjecture}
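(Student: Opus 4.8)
The plan is to recast $s$ as an explicit rational function of $M$ and then count the connected components of $\{s<1\}$ by controlling the topology of the real algebraic curve $\Gamma_\Q=\{M:s(M)=1\}$. Placing $M$ at the origin and writing the vertices as complex numbers $z_1,\dots,z_n$, a single application of the map sends $z_i\mapsto \frac{z_i z_{i+1}(\bar z_i-\bar z_{i+1})}{\bar z_i z_{i+1}-z_i\bar z_{i+1}}$ (the circumcenter of $0,z_i,z_{i+1}$). Since $\C_M^{\,n}$ is a similarity fixing $M$ (\cref{cor:similar}), it acts as $z\mapsto\mu\,z$ for a complex $\mu=\mu(M)$, and the area ratio is $s=|\mu|^2$. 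Composing the $n$ per-step maps and clearing denominators shows $s$ is a rational function of $(x_m,y_m)$ whose level set $\{s=1\}$ is an algebraic curve of degree $2n$; this recovers the degree $6$ and degree $8$ curves of \cref{prop:equi6} and the square proposition, and, crucially, the degree is $2n$ for \emph{every} simple $n$-gon, not just the regular one. By \cref{prop:inf} the $n$ sidelines are poles of $s$, which pins the behaviour of $\Gamma_\Q$ near the boundary of $\Q$ and at infinity after compactification.

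First I would establish a uniform upper bound on the number of area-contracting components, valid for all simple $n$-gons. Viewing $s$ as a Morse function on the compactified plane (a sphere, as in \cref{fig:n3to11}), each connected component of $\{s<1\}$ contains at least one local minimum of $s$, so the number of such components is at most the number of critical points of $s$. These critical points are the common real zeros of $\partial_x s$ and $\partial_y s$, and by Bézout their number is bounded by a quantity depending only on the degree $2n$. Because the degree is the same for all simple $n$-gons, this yields one bound $U(n)$ that no simple $n$-gon can exceed.

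Next I would show the regular polygon realises the count asserted in \cref{conj:k}. Here the dihedral symmetry $D_n$ of $\R$ is decisive: it forces $s$ to be $D_n$-invariant, so its critical points come in full symmetry orbits and, along the fixed lines of the reflections (which are exactly the zero-rotation lines, cf.\ \cref{prop:alphazero}), $s$ restricts to a one-variable function whose sign changes can be counted explicitly. Combining the orbit structure with the pole pattern on the $n$ sidelines would reproduce the $1+O(n^2)$ tallies of \cref{tab:regions} and identify every area-contracting component with a genuine local minimum of $s$ --- i.e.\ the regular polygon saturates the critical-point bound. The maximality claim then follows if one shows the regular count equals the uniform bound $U(n)$, or, failing an exact match, by a deformation argument: join any simple $n$-gon to $\R$ by a path of simple polygons and track $\Gamma_\Q$; components of $\{s<1\}$ are created or destroyed only at saddle--node events of the level set, and one argues that symmetry-breaking can only coalesce minima, never create strictly more than the symmetric configuration supports.

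The main obstacle is precisely this last, global, step. The degree of $\Gamma_\Q$ --- and hence the Bézout/Harnack ceiling --- is identical for all simple $n$-gons, so the bound cannot by itself single out the regular case; what distinguishes $\R$ is that its symmetry makes the maximal number of critical points \emph{real} and arranges the corresponding ovals to bound distinct regions, whereas a generic polygon sends critical points into complex-conjugate pairs or merges ovals, lowering the real component count. Turning ``symmetry maximises the number of real ovals/real minima'' into a theorem is hard: the space of simple $n$-gons is high-dimensional with a singular boundary of degenerate and self-intersecting configurations, and ovals of a real plane curve can split and merge nonmonotonically under deformation, so there is no obvious invariant that decreases under every symmetry-breaking perturbation. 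A complete proof would need either an independent upper bound that is exactly the regular count (so that the bound and the construction pinch), or a careful bifurcation analysis controlling the reality of the critical points of $s$ across the whole configuration space --- the step I expect to be genuinely difficult.
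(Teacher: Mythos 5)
First, a point of orientation: the paper does not prove this statement --- it is explicitly a conjecture, offered with only the experimental evidence of \cref{tab:regions} and \cref{fig:n3to11} --- so the only question is whether your argument establishes it independently. It does not, and your own closing paragraph correctly identifies why; let me make the obstructions concrete. Your ``pinching'' strategy requires a uniform upper bound $U(n)$ that equals the regular count of \cref{conj:k}. But the bound you propose cannot come close: granting that $s=P/Q$ with $\deg P,\deg Q\le 2n$ (itself an extrapolation from the $n=3,4$ computations --- a priori the composition of $n$ per-vertex rational maps has much larger degree, and the collapse to degree $2n$ for an \emph{arbitrary} simple $n$-gon is unproven), the critical equations $\partial_x s=\partial_y s=0$ have numerators of degree up to $4n-1$, so B\'ezout bounds the number of critical points by $(4n-1)^2\approx 16n^2$, whereas \cref{conj:k} asserts roughly $n^2/2$ components. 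An upper bound that overshoots the target by a factor of about $30$ can never be shown equal to it; you would need a fundamentally sharper, non-B\'ezout estimate, and nothing in the proposal indicates where it would come from.

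The fallback deformation argument has a gap of a different kind: it needs control of critical \emph{values}, while everything you set up (Morse counting, $D_n$-orbits of critical points, Harnack-type considerations) controls only critical \emph{points}. Along a path of simple polygons from $\Q$ to the regular polygon $\R$, a new component of $\{s<1\}$ is born precisely when some local minimum of $s$ has its value cross the level $1$ from above; the symmetry of the endpoint $\R$ says nothing about whether this happens along the way, and there is no monotonicity principle by which symmetry-breaking ``can only coalesce minima'' --- small perturbations of real algebraic curves can and do create ovals. So neither branch of your endgame closes: the bound does not pinch, and the deformation is not controlled. What you have is a reasonable research program whose first, computational part (the rationality of $s$, the pole structure from \cref{prop:inf}, the symmetry of the critical set) is sound, but the conjecture remains exactly as open as the paper leaves it.
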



\begin{figure}
    \centering
    \includegraphics[width=\textwidth]{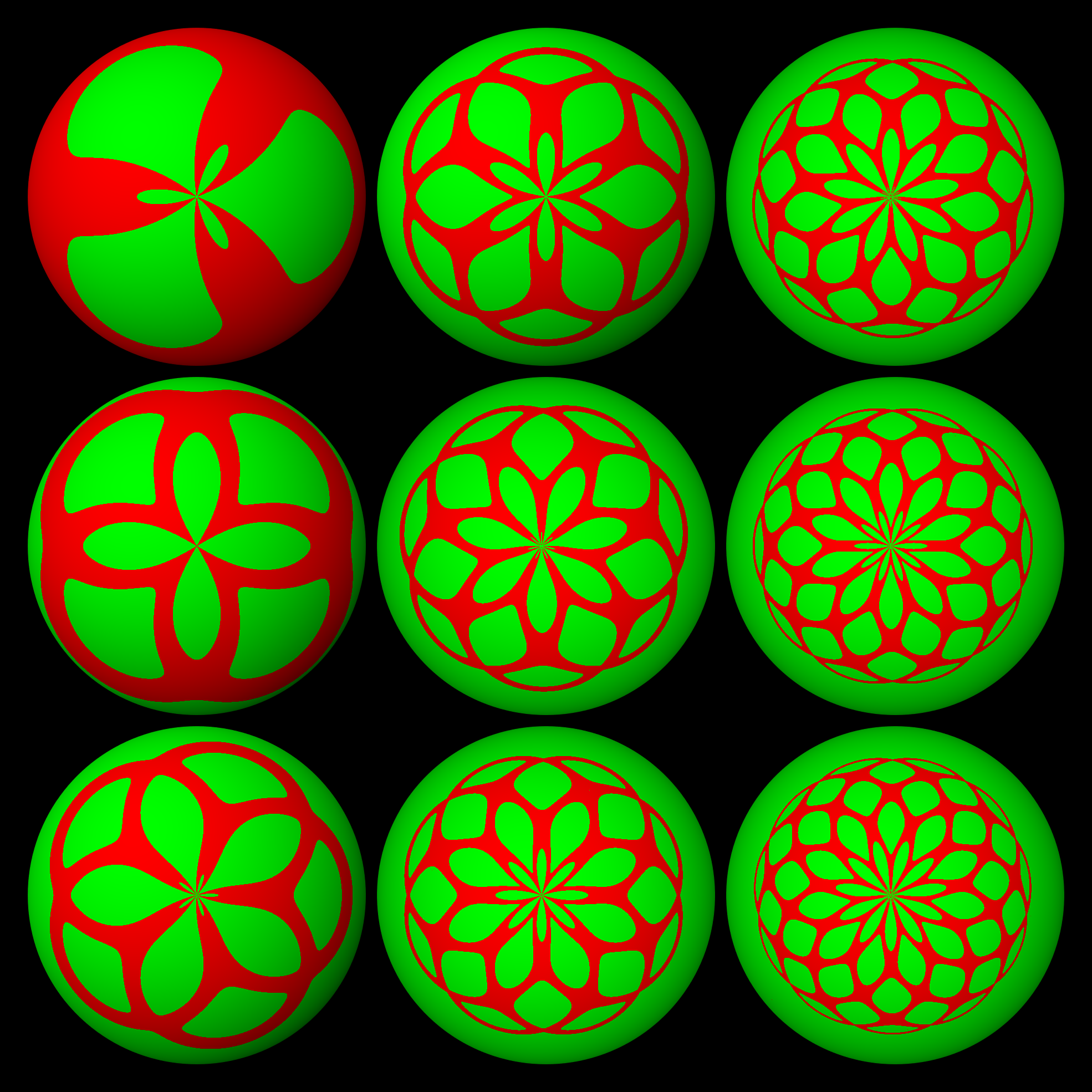}
     \caption{Area-expansion (red) and area-contraction (green) regions for regular $n$-gons, compactified (via stereographic projection) to a single hemisphere; the south pole (center) is ``infinity''. From top-to-bottom, left-to-right, $N=3,\ldots,11$. For interactive examples, see \href{https://bit.ly/3Pl2tmz}{\texttt{bit.ly/3Pl2tmz}} and \cite{mcdonald2022}.}
    \label{fig:n3to11}
\end{figure}

\section{conclusion}
\label{sec:conclusion}
A  video walk-through of our experiments appears in \cite{reznik2021-circum-youtube}. A gallery of interactive experiments appears in  \href{https://bit.ly/3Pl2tmz}{\texttt{bit.ly/3Pl2tmz}} with implementation details in \cite{mcdonald2022}.

See \cite{mcdonald2022} for a gallery of 
interactive experiments with the circumcenter map.

The circumcenter map can be generalized to the $X_k$-map, where $X_k$ is some triangle center (see \cite{kimberling1993_rocky}). For example, the $X_2$-map sends a polygon to one with vertices at the barycenters of $M P_i P_{i+1}$ of vertices of a given polygon. In such a case, an iteration produces a sequence of ever-shrinking polygons which converges to $M$. If the starting polygon is a triangle, a few notable cases include: (i) the $X_4$-map (orthocenter) is area preserving for all $M$, and the sequence of triangles tends to an infinite line; (ii) the $X_{16}$-map (second isodynamic point) induces regions of the plane such that 3 applications of the map are the identity (no rotation and no scaling), see \cite{reznik2021-wolfram-iso}. A question not addressed here, is whether a certain $X_k$-map is integrable in the sense of \cite{maxim2021-centroaffine,ovsienko2010-pentagram}.

\section*{Acknowledgements}
\noindent We would like to thank Sergei Tabachnikov and Richard Schwartz for discussions during the early experimental results, and Darij Grinberg for contributing a proof to \cref{cor:similar}. We thank Mark Helman for relating this phenomenon to the 1940 result by Stewart \cite{stewart1940}. We are indebted to Wolfram Communities for inviting us to post a short description of our experimental results, see \cite{reznik2021-wolfram}. Finally, we thank one of the referees for suggesting the inclusion of \cref{prop:alphazero} and 
for contributing the proof of \cref{prop:inf}.

\appendix


\section{Explicit Circumcenter Map}
\label{app:map-tri}
Let $\P$ be a generic $n$-gon with vertices  $(x_i,y_i)$,  $i=1,\ldots,n$, and $M=(x_m,y_m)$. The $M$-circumcenter map $\C_M(\P)$ yields a new polygon with vertices $(p_i,q_i)$ given by:
  {\small
 \begin{align*}
   p_{i}&=\frac{(  y_{i+1} -  y_m)   y_{i}^2 + \rho_i   y_{i} +   y_{i+1}^2  y_m + (  x_i^2 -  x_m^2 -  y_m^2)   y_{i+1} + (  x_{i+1}^2-  x_i^2 )  y_m}
   {2(x_m-    x_{i+1}  )   y_{i} + 2(    x_i -    x_m)   y_{i+1} + 2    (     x_{i+1}-x_i) y_m}\\
   q_{i}&=\frac{( x_m -   x_{i+1})   x_i^2 - \rho_i   x_i -   x_{i+1}^2  x_m + ( x_m^2 +  y_m^2 -   y_{i}^2 )   x_{i+1} +  x_m (  y_{i}^2 -   y_{i+1}^2)}{2(   y_{i+1} -    y_m)   x_i + 2(y_m-    y_{i}  )   x_{i+1} + 2   (y_i-  y_{i+1}  )x_m}
 \end{align*}
 }
  where $\rho_i=|M|^2-|B|^2=
 x_m^2+ y_m^2  -    x_{i+1}^2 - y_{i+1}^2  $.
 
Likewise, the inverse the $M$-circumcenter map $\C_M^{-1}(\P)$ yields a new polygon with vertices $(u_i,v_i)$ given by:
 
{\small
\begin{align*}
u_i&=\frac{r_i' x_m
+ 2(   y_{i+1} -    y_{i}) ( x_{i+1} -  x_{i}) y_m + 2(  y_{i+1} -  y_{i}) ( x_i  y_{i+1} -  x_{i+1}  y_{i})}{r_i}\\
v_i&= \frac{- r_i' y_m +2 (  y_{i+1} -  y_{i}) ( x_{i+1} -  x_{i}) x_m 
- 2 ( x_{i+1} -  x_{i}) ( x_i  y_{i+1} -  x_{i+1}  y_{i})}{r_i}
\end{align*}
}
where $r_i=( x_{i+1} -  x_i)^2 + ( y_{i+1} -  y_{i})^2$, and $r_i'=(x_{i+1} - x_i)^2 -  (y_{i+1} - y_i)^2$.



\bibliographystyle{hacm}
\raggedright
\setlength{\baselineskip}{13pt}
\bibliography{999_refs,999_refs_rgk}

\begin{thebibliography}{10}

\bibitem{maxim2021-centroaffine}
{\sc Arnold, M., Fuchs, D., and Tabachnikov, S.}
\newblock A family of integrable transformations of centroaffine polygons:
  geometrical aspects, 12 2021, {arXiv}:2112.08124.

\bibitem{johnson1918-similar}
{\sc Johnson, R.~A.}
\newblock The theory of similar figures.
\newblock {\em Amer. Math. Monthly 25}, 3 (1918), 108--113.

\bibitem{kimberling1993_rocky}
{\sc Kimberling, C.}
\newblock Triangle centers as functions.
\newblock {\em Rocky Mountain J. Math. 23}, 4 (1993), 1269--1286.

\bibitem{mcdonald2022}
{\sc Mc{D}onald, N.}
\newblock Exploring the circumcenter map: {GPU} acceleration.
\newblock In {\em Nick's Blog}. nickcmd.org, 2022.
\newblock https://bit.ly/3Pl2tmz.

\bibitem{ovsienko2010-pentagram}
{\sc Ovsienko, V., Schwartz, R., and Tabachnikov, S.}
\newblock The pentagram map: A discrete integrable system.
\newblock {\em S. Commun. Math. Phys. 299\/} (2010), 409--446.

\bibitem{reznik2021-circum-youtube}
{\sc Reznik, D.}
\newblock Dynamics of the circumcenter map: Periodicity, stability, converging,
  and diverging zones.
\newblock {YouTube}, 3 2021.
\newblock https://youtu.be/y6F8SmA67pw.

\bibitem{reznik2021-wolfram-iso}
{\sc Reznik, D.}
\newblock Identity zones of the isodynamic map.
\newblock Wolfram Communities, 4 2021.

\bibitem{reznik2021-wolfram}
{\sc Reznik, D., and Garcia, R.}
\newblock Dynamics of the circumcenter map.
\newblock Wolfram Communities, 4 2021.

\bibitem{stewart1940}
{\sc Stewart, B.~M.}
\newblock Cyclic properties of {M}iquel polygons.
\newblock {\em Am. Math. Monthly 47}, 7 (1940), 462--466.

\bibitem{stupel2018-subcevians}
{\sc Stupel, M.}
\newblock A triangle ``broken'' into four triangles -- the special status of
  the central triangle.
\newblock {\em J. for Geom. and Graphics 22}, 2 (2018), 253--256.

\end{thebibliography}


\end{document}